\tikzset{
  symbol/.style={
    draw=none,
    every to/.append style={
      edge node={node [sloped, allow upside down, auto=false]{$#1$}}}
  }
}
\tikzset{edge/.style= {line width=0.75pt} }
\tikzset{commutative diagrams/.cd,
	mysymbol/.style={start anchor=center,end anchor=center,draw=none}
}
\newtheoremstyle{break}
{\topsep}{\topsep}%
{\itshape}{}%
{\bfseries}{}%
{\newline}{}%
\theoremstyle{definition}
\newtheorem{definition}{Definition}[section]
\theoremstyle{plain}
\newtheorem{theorem}[definition]{Theorem}
\newtheorem{proposition}[definition]{Proposition}
\newtheorem{lemma}[definition]{Lemma}
\theoremstyle{remark}
\newtheorem{remark}[definition]{Remark}
\DeclareMathAlphabet\wiskunde{U}{msb}{m}{n}
\newcommand{\N}{{\wiskunde N}}
\newcommand{\Z}{{\wiskunde Z}}
\DeclareMathAlphabet\gotisch{U}{euf}{m}{n}
\newcommand{\Aut}{{\rm Aut}}
\newcommand{\End}{{\rm End}}
\newcommand{\Gl}{{\rm GL}}
\newcommand{\Spec}{{\rm Spec}_R}
\newcommand{\Id}{{\rm Id}}
\newcommand{\diag}{{\rm diag}}
\newcommand{\I}{\mathcal{I}}
\newcommand*{\olvarphi}{\overline\varphi}
\newcommand{\biggg}{\bBigg@\thr@@}
\newcommand{\Biggg}{\bBigg@{3}}
\title{$R_{\infty}$--property for finitely generated torsion-free 2-step nilpotent groups of small Hirsch length}
\author{Karel Dekimpe\thanks{Supported by Methusalem grant METH/21/03 - long term structural funding of the Flemish Government.}, Maarten Lathouwers\thanks{Researcher funded by FWO PhD-fellowship fundamental research (file number: 1102424N).}\\
	Karel.Dekimpe\@@kuleuven.be, Maarten.Lathouwers\@@kuleuven.be}
\date{\today}
\begin{document}
\maketitle
\begin{abstract}
In this paper we will show that finitely generated torsion-free 2-step nilpotent groups of Hirsch length at most 6 do not have the $R_{\infty}$--property, while there are examples of such groups of Hirsch length 7 that do have the $R_{\infty}$--property.      
\end{abstract}
\section{Preliminaries}
In this paper we study the twisted conjugacy relation on 2-step nilpotent groups of small Hirsch length. To be precise, we will show that all of the finitely generated, torsion-free  2-step nilpotent groups of Hirsch length at most 6 admit an automorphism with finite Reidemeister number, while there are examples of Hirsch length 7 that have the $R_{\infty}$--property. In this first section, we will start by recalling some basic notions about finitely generated torsion-free nilpotent groups and secondly about twisted conjugacy and Reidemeister numbers, especially for these groups. 
\subsection{Hirsch length and \texorpdfstring{$\I(n,m)$}{I(n,m)} groups}
For any group $G$ we define inductively $\gamma_1(G):=G$ and $\gamma_{i+1}(G):=[\gamma_i(G),G]$ (for $i\in \N_{>0}$) to be the \textit{lower central series} of $G$. A group is \textit{c-step nilpotent} if $\gamma_{c}(G)\neq 1$ and $\gamma_{c+1}(G)=1$. Any finitely generated nilpotent group $G$ admits a series $1=G_1\lhd G_2\lhd \dots\lhd G_s=G$ with cyclic factors, i.e. $G_{i+1}/G_i$ is cyclic (see for example \cite[Theorem 17.2.2]{km79}). A group having such a subnormal series with cyclic factors is called a \textit{polycyclic} group. The \textit{Hirsch length} $h(G)$ of a polycyclic group $G$ is defined as the number of infinite cyclic factors in such a series. One can show the following properties regarding the Hirsch length.

\begin{lemma}[{\cite[page 16]{sega83}}]\label{lemma:HL properties}
	If $G$ is a polycyclic group (e.g.\ a finitely generated nilpotent group), then the Hirsch length is well-defined (i.e.\ independent of the chosen series). Moreover, if we fix some $H\subseteq G$ and $N\lhd G$, then the following holds:
        \begin{enumerate}
		\item[(i)] $h(H)\leq h(G)$
            \item[(ii)] $h(H)=h(G)\: \Longleftrightarrow \: H\subseteq_{\text{fin}} G$ (i.e. $H$ is a finite index subgroup of $G$)
		\item[(iii)] $h(G)=h(N)+h(G/N)$
		\item[(iv)] $h(G)=0\: \Longleftrightarrow\: G$ is finite
	\end{enumerate}
\end{lemma}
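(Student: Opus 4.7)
The plan is to treat the four bullets as consequences of one structural observation: any subnormal series with cyclic factors behaves well under intersection with subgroups and under quotients, and the number of infinite cyclic factors is a Jordan--Hölder-style invariant.

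To get well-definedness I would invoke Schreier's refinement theorem: any two subnormal series of $G$ admit equivalent refinements, meaning the non-trivial factors agree up to permutation and isomorphism. Starting from two series with cyclic factors, refining a finite cyclic factor can only yield finite cyclic factors, while refining an infinite cyclic $\Z$-factor by inserting a subgroup $n\Z$ produces exactly one infinite cyclic factor together with a finite cyclic factor. Hence the number of infinite cyclic factors is preserved under refinement, and Schreier's theorem then gives that this count agrees for the two original series. The key calculation here is the observation about refinements of $\Z$, which is elementary.

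For (iii) and (iv), given a normal subgroup $N\lhd G$, one takes a cyclic-factor series for $N$ and for $G/N$, lifts the latter to $G$, and concatenates; the infinite-factor count is literally additive, giving (iii). For (iv), any cyclic-factor series of a finite group has only finite factors (so $h=0$), and conversely a polycyclic group with all factors finite is itself finite by induction on the series length.

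Property (i) I would prove by the intersection trick: if $1=G_0\lhd G_1\lhd\dots\lhd G_s=G$ is a cyclic-factor series, then $H_i:=H\cap G_i$ defines a subnormal series of $H$, and each factor $H_i/H_{i-1}$ embeds into $G_i/G_{i-1}$, hence is cyclic and is finite whenever $G_i/G_{i-1}$ is. So $h(H)\leq h(G)$ counts at most the $\Z$-factors of the big series.

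Property (ii) is where most of the care goes and is the main obstacle. The $(\Leftarrow)$ direction reduces to the normal case by passing to the normal core $K=\bigcap_{g\in G}gHg^{-1}$, which has finite index in $G$ (since $G/K$ embeds into a symmetric group) and is contained in $H$; combining (iii) with (iv) applied to $G/K$ gives $h(K)=h(G)$, and then (i) forces $h(H)=h(G)$. The $(\Rightarrow)$ direction is the delicate part: using the same intersected series $H_i=H\cap G_i$, the equality $h(H)=h(G)$ forces every infinite cyclic factor $G_i/G_{i-1}\cong\Z$ to pull back to an infinite cyclic subgroup $H_i/H_{i-1}$, which is automatically of finite index in $\Z$; finite factors of course stay finite. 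A short induction on $i$, using that finite index is transitive in towers, then shows $H_i\subseteq_{\text{fin}} G_i$ for every $i$, yielding $H\subseteq_{\text{fin}} G$ at the top. The one subtlety to watch is that finite index in each successive quotient genuinely composes to finite index in $G$, which follows from $[G_i:H_i]\leq[G_i:G_{i-1}H_i]\cdot[G_{i-1}H_i:H_i]$ and the second factor equaling $[G_{i-1}:H_{i-1}]$.
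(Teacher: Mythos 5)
Your proof is correct. The paper gives no argument of its own for this lemma---it is quoted directly from Segal's book---and what you have written is essentially the standard proof one finds there: Schreier refinement (plus the elementary observation that refining a $\Z$-factor produces exactly one infinite cyclic factor) for well-definedness, the intersected series $H_i=H\cap G_i$ for (i) and the forward direction of (ii), and the normal core combined with (iii) and (iv) for the reverse direction of (ii). The one step that genuinely needs care is the one you already flagged, namely that $[G_{i-1}H_i:H_i]=[G_{i-1}:G_{i-1}\cap H_i]=[G_{i-1}:H_{i-1}]$, so the finite indices compose up the tower; this is right as you state it.
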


In this paper, we will consider 2-step nilpotent groups. For these groups, it is well known that the commutator map is bilinear.

\begin{lemma}[\cite{mks66}]\label{lemma:bilinearity [.,.] 2-nilpotent}
	If $G$ is a $2$-step nilpotent group, then $[\:.\:,.\:]:G\times G\to G$ is bilinear, i.e. for all $g_1,g_2,g_1',g_2'\in G$ it holds that 
	\[[g_1g_2,g_1'g_2']=[g_1,g_2']\cdot[g_2,g_2']\cdot[g_1,g_1']\cdot[g_2,g_1'].\]
\end{lemma}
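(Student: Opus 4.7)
The plan is to reduce the claim to the two standard one-sided commutator identities and then exploit that the commutator subgroup of a 2-step nilpotent group is central.

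First I would recall the universal identities, valid in any group,
\[
[ab,c] = [a,c]^{b}\,[b,c] \qquad\text{and}\qquad [a,bc] = [a,c]\,[a,b]^{c},
\]
where $x^{y}=y^{-1}xy$. The crucial observation is that since $G$ is 2-step nilpotent, $\gamma_{3}(G)=[[G,G],G]=1$, which means every element of $[G,G]$ is central in $G$. In particular $[a,c]^{b}=[a,c]$ and $[a,b]^{c}=[a,b]$, so the above identities simplify to
\[
[ab,c]=[a,c]\,[b,c],\qquad [a,bc]=[a,b]\,[a,c].
\]
This is exactly bilinearity of the commutator map.

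Next I would apply these two identities to $[g_{1}g_{2},\,g_{1}'g_{2}']$ in succession: first expand in the left slot to obtain $[g_{1},g_{1}'g_{2}']\,[g_{2},g_{1}'g_{2}']$, and then expand each factor in the right slot to get
\[
[g_{1},g_{1}']\,[g_{1},g_{2}']\,[g_{2},g_{1}']\,[g_{2},g_{2}'].
\]
Finally, since each of these four factors lies in the centre of $G$, they pairwise commute, so we may reorder them freely to obtain the stated form
\[
[g_{1},g_{2}']\cdot[g_{2},g_{2}']\cdot[g_{1},g_{1}']\cdot[g_{2},g_{1}'].
\]

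There is no real obstacle here; the whole argument rests on the single structural fact that in a 2-step nilpotent group commutators are central, which immediately trivialises the conjugation terms in the universal commutator identities and lets us commute the four resulting factors past one another. The only thing to be careful about is the order of expansion and rearrangement, since the claimed order of the four factors on the right-hand side is not the one the identities naturally produce.
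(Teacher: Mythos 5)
Your proof is correct: the conjugation terms in the universal identities $[ab,c]=[a,c]^b[b,c]$ and $[a,bc]=[a,c][a,b]^c$ do become trivial because $\gamma_3(G)=1$ forces $[G,G]\subseteq Z(G)$, and centrality also justifies the final reordering of the four factors. The paper gives no proof of its own (it simply cites Magnus--Karrass--Solitar), and your argument is exactly the standard one found there, so there is nothing further to compare.
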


The \textit{adapted lower central series} $\sqrt{\gamma_i(G)}$ of $G$ is defined by taking the \textit{isolator} of the lower central series, i.e.\ $\sqrt{\gamma_i(G)}:=\{g\in G\:\vert\: \exists k\in \N_{>0}:\: g^k\in \gamma_i(G)\}$. This series is central (i.e. $[\sqrt{\gamma_i(G)},G]\subset \sqrt{\gamma_{i+1}(G)}$). It will terminate in the trivial group if and only if $G$ is a torsion-free nilpotent group. Moreover, the terms are characteristic subgroups of $G$. Hence, any automorphism $\varphi\in \Aut(G)$ induces automorphisms $\varphi_i\in \Aut(\sqrt{\gamma_i(G)}/\sqrt{\gamma_{i+1}(G)})$. For finitely generated torsion-free nilpotent groups, the factors of the adapted lower central series are free abelian. Thus, in this case, the induced automorphisms $\varphi_i$ correspond to invertible integer matrices. We will address the eigenvalues of these matrices by talking about \textit{the eigenvalues of $\varphi_i$} (or more generally \textit{the eigenvalues of $\varphi$}). We will use the rank of the factors of the adapted lower central series to further divide the class of finitely generated torsion-free 2-step nilpotent groups.

\begin{definition}\label{def:I(n,m) groups}
	Let $G$ be a finitely generated torsion-free 2-step nilpotent group. We say that $G\in \I(n,m)$ (with $n,m\in \N_{>0}$) if
	\[ \frac{G}{\sqrt{\gamma_2(G)}}\cong \Z^n\quad \text{and}\quad \sqrt{\gamma_2(G)}\cong \Z^m.\]
\end{definition}
Fix some $G\in \I(n,m)$. Note that $h(G)=h(G/\sqrt{\gamma_2(G)})+h(\sqrt{\gamma_2(G)})=n+m$. Fix a $\Z$-basis $\{g_1\sqrt{\gamma_2(G)},\dots, g_n\sqrt{\gamma_2(G)}\}$ of $G/\sqrt{\gamma_2(G)}$. Since $G$ is finitely generated nilpotent, it holds that $\gamma_2(G)\subseteq_{\text{fin}} \sqrt{\gamma_2(G)}$ (by for example \cite[Lemma 2.8]{baum71}) and thus also $\gamma_2(G)\cong \Z^m$. Note that $\sqrt{\gamma_2(G)}\subset Z(G)$ (since the adapted lower central series is a central series of $G$). Using Lemma \ref{lemma:bilinearity [.,.] 2-nilpotent} this implies that $\gamma_2(G)$ is generated by $\{[g_i,g_j]\:\vert\: 1\leq i<j\leq n\}$. Hence, we obtain that
\[m=h(\gamma_2(G))\leq \# \{[g_i,g_j]\:\vert\: 1\leq i<j\leq n\}\leq n(n-1)/2.\]
Thus when considering 2-step nilpotent groups of a fixed Hirsch length $k\in \N_{>0}$, we can distinguish them by using the classes $\I(n,m)$ with $n,m\in \N_{>0}$, $n+m=k$ and $m\leq n(n-1)/2$.


\subsection{Twisted conjugacy}
Let $G$ be a group and $\varphi\in \End(G)$. We say that two elements $a,b\in G$ are \textit{$\varphi$-conjugate} or \textit{twisted conjugate} if there exists some $c\in G$ such that $a=cb\varphi(c)^{-1}$. Being twisted conjugate is an equivalence relation on $G$. The number of equivalence classes is called the \textit{Reidemeister number} $R(\varphi)$ of $\varphi$ and the collection of all Reidemeister numbers of automorphisms of $G$ is called the \textit{Reidemeister spectrum} $\Spec(G)$, i.e.
\[ \Spec(G):=\{R(\varphi)\:\vert\:\varphi\in \Aut(G)\}\subset \N_{>0}\cup \{\infty\}. \]
We say that $G$ has the \textit{$R_{\infty}$--property} if $\Spec(G)=\{\infty\}$.

In order to determine whether or not an automorphism has an infinite Reidemeister number, we will make use of the next two (well-known) theorems.

\begin{theorem}[{\cite[Lemma 2.2]{dg14}} and {\cite[Corollary 4.2]{roma11}}]\label{thm:equivalent statements R-inf}
	Let $G$ be a finitely generated torsion-free $c$-step nilpotent group and $\varphi\in \Aut(G)$, then the following are equivalent:
	\begin{enumerate}
		\item[(i)] $R(\varphi)=\infty$
		\item[(ii)] There exists some $i=1,2,\dots,c$ such that $\varphi_i\in \Aut\left(\frac{\sqrt{\gamma_i(G)}}{\sqrt{\gamma_{i+1}(G)}}\right)$ has $1$ as an eigenvalue.
		\item[(iii)] There exists some $i=1,2,\dots,c$ such that $R(\varphi_i)=\infty$.
	\end{enumerate}
\end{theorem}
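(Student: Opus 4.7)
My plan is to deduce the three-way equivalence in two parts: first establish (ii) $\Leftrightarrow$ (iii) by a direct computation on the abelian factors of the adapted lower central series, and then prove (i) $\Leftrightarrow$ (iii) by induction on the nilpotency class $c$, using the central extension
\[ 1 \to \sqrt{\gamma_c(G)} \to G \to G/\sqrt{\gamma_c(G)} \to 1. \]

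For (ii) $\Leftrightarrow$ (iii), each factor $F_i := \sqrt{\gamma_i(G)}/\sqrt{\gamma_{i+1}(G)}$ is finitely generated and torsion-free abelian, so $F_i \cong \Z^{n_i}$ and $\varphi_i$ corresponds to a matrix $A_i \in \Gl_{n_i}(\Z)$. The classical identification of $\varphi_i$-twisted conjugacy classes in $\Z^{n_i}$ with cosets of $\Im(\Id - A_i)$ then yields $R(\varphi_i) = |\det(\Id - A_i)|$ when $\det(\Id - A_i) \neq 0$, and $R(\varphi_i) = \infty$ otherwise, which is precisely the statement that $R(\varphi_i) = \infty$ if and only if $1$ is an eigenvalue of $\varphi_i$.

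For (i) $\Leftrightarrow$ (iii), I would argue by induction on $c$. The base case $c = 1$ is immediate, since then $G$ is torsion-free abelian and $\varphi = \varphi_1$. For the inductive step, set $Q := G/\sqrt{\gamma_c(G)}$. Since $[\sqrt{\gamma_c(G)}, G] \subset \sqrt{\gamma_{c+1}(G)} = 1$, the kernel is central (and free abelian), and $Q$ is a finitely generated torsion-free nilpotent group of class at most $c - 1$ whose adapted lower central series has factors $F_1, \ldots, F_{c-1}$. Writing $\overline{\varphi}$ for the induced automorphism on $Q$, the induced maps on the factors of $Q$ are exactly $\varphi_1, \ldots, \varphi_{c-1}$, so by the induction hypothesis (i) $\Leftrightarrow$ (iii) for $G$ reduces to proving
\[ R(\varphi) = \infty \iff R(\overline{\varphi}) = \infty \text{ or } R(\varphi_c) = \infty. \]

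The main obstacle is this central-extension equivalence. One direction is fairly direct: the surjection $G \twoheadrightarrow Q$ sends $\varphi$-classes onto $\overline{\varphi}$-classes, giving $R(\overline{\varphi}) \leq R(\varphi)$, and if $\varphi_c$ has $1$ as an eigenvalue then a $\varphi_c$-fixed vector $v \in \sqrt{\gamma_c(G)}$ produces integer powers $v^k$ that, by centrality and projection to the $F_c$-layer, lie in pairwise distinct $\varphi$-classes, forcing $R(\varphi) = \infty$. The harder half is to show that finiteness of both $R(\overline{\varphi})$ and $R(\varphi_c)$ forces $R(\varphi) < \infty$. For this I would choose a transversal for the $\overline{\varphi}$-classes in $Q$, lift it to $G$, and analyse $\varphi$-conjugacy fiberwise; the centrality of $\sqrt{\gamma_c(G)}$ together with its identification $\sqrt{\gamma_c(G)} \cong \Z^{n_c}$ reduces the fiberwise twisted conjugacy to the abelian problem governed by $\Id - \varphi_c$, yielding the bound $R(\varphi) \leq R(\overline{\varphi}) \cdot R(\varphi_c)$. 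The bookkeeping of commutator contributions when passing from lifts to conjugacy (where bilinearity as in Lemma \ref{lemma:bilinearity [.,.] 2-nilpotent}, and its higher-class analogues, is essential) is the genuinely technical part of the argument.
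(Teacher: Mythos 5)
First, a remark on what you are being compared against: the paper does not prove this statement at all --- it is imported verbatim from \cite[Lemma 2.2]{dg14} and \cite[Corollary 4.2]{roma11}, and given Theorem~\ref{thm:RN product} (also quoted, not proved) the equivalence (i) $\Leftrightarrow$ (iii) is immediate by applying the product formula to the adapted lower central series, while (ii) $\Leftrightarrow$ (iii) is exactly the abelian computation you give. So your proposal is in effect an attempt to reprove the product formula for a two-term central series. Your reduction (ii) $\Leftrightarrow$ (iii) is correct, as is your sketch of the inequality $R(\varphi)\leq R(\overline{\varphi})R(\varphi_c)$: for a \emph{central} kernel the fiberwise analysis needs nothing beyond centrality (if $\bar h=\bar c\,\bar g\,\overline{\varphi}(\bar c)^{-1}$ then $h\sim_{\varphi}gn$ for some $n\in\sqrt{\gamma_c(G)}$, and $gn\sim_{\varphi}gn'$ already whenever $n^{-1}n'\in(\Id-\varphi_c)(\sqrt{\gamma_c(G)})$), so the ``commutator bookkeeping'' you worry about does not actually arise.

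The genuine gap is in the implication ``$1$ is an eigenvalue of $\varphi_c$ $\Rightarrow R(\varphi)=\infty$''. Your claim that the powers $v^k$ of a $\varphi_c$-fixed vector $v\in\sqrt{\gamma_c(G)}$ lie in pairwise distinct $\varphi$-classes is false. By centrality, $v^k\sim_{\varphi}v^l$ if and only if $v^{k-l}\in S\cap\sqrt{\gamma_c(G)}$ with $S=\{c\varphi(c)^{-1}:c\in G\}$, and $S$ can meet the center in elements coming from $c$ \emph{outside} the center. Concretely, take $G=\langle u\rangle\times H$ with $H=\langle x,y,z\mid [x,y]=z,\ z\text{ central}\rangle$ the Heisenberg group, and $\varphi(u)=uz$, $\varphi\vert_H=\Id$; this is an automorphism by Proposition~\ref{prop:phi1 aut ==> phi aut}, $z$ is a $\varphi_2$-fixed vector, yet $u^k\varphi(u^k)^{-1}=z^{-k}$ for all $k$, so \emph{every} power of $z$ is $\varphi$-conjugate to $1$. (Here $R(\varphi)=\infty$ for a different reason, namely $R(\varphi_1)=\infty$; the point is that your argument cannot be localized to the top layer.) To repair it, split cases: if $R(\overline{\varphi})=\infty$ you are done via the surjection $G\twoheadrightarrow Q$; if $R(\overline{\varphi})<\infty$, the induction hypothesis gives that no $\varphi_i$ with $i<c$ has eigenvalue $1$, which forces $\Fix(\overline{\varphi})=1$ (this needs its own short induction along the adapted series of $Q$), and only then does $c\varphi(c)^{-1}\in\sqrt{\gamma_c(G)}$ imply $c\in\sqrt{\gamma_c(G)}$, so that $S\cap\sqrt{\gamma_c(G)}=(\Id-\varphi_c)(\sqrt{\gamma_c(G)})$ and the classes meeting the center are the infinitely many cosets of $\Im(\Id-\varphi_c)$. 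With that insertion your induction closes; without it, the key implication is unproved.
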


\begin{theorem}[{\cite[Proposition 5]{dgo21}} and {\cite[Lemma 2.7]{roma11}}]\label{thm:RN product}
	Let $G$ be a finitely generated nilpotent group. Let
	\[G=G_1\supseteq G_2\supseteq \dots \supseteq G_{s}\supseteq G_{s+1}=1\]
	be a central series of $G$ and $\varphi\in \Aut(G)$ such that the following holds:
	\begin{enumerate}
		\item All the factors $G_i/G_{i+1}$ (with $i=1,2,\dots,s$) are torsion-free.
		\item For all terms $G_i$ (with $i=1,2,\dots,s+1$) it holds that $\varphi(G_i)=G_i$.
	\end{enumerate}
	Then it holds that:
	\[R(\varphi)=\prod_{i=1}^s R(\varphi_i)\]
	where $\varphi_i:G_i/G_{i+1}\to G_i/G_{i+1}$ (with $i=1,2,\dots,s$) are the induced automorphisms on the factor groups $G_i/G_{i+1}$.
\end{theorem}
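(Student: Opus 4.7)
The plan is to proceed by induction on the length $s$ of the central series; the case $s=1$ is immediate. For the inductive step, set $N := G_s$, which is central in $G$ (since $[G,G_s] \subseteq G_{s+1} = 1$) and $\varphi$-invariant by hypothesis. The quotient $Q := G/N$ inherits a central series of length $s-1$ with the same torsion-free factors $G_i/G_{i+1}$, each preserved by the induced automorphism $\bar\varphi$. By the inductive hypothesis applied to $Q$ and $\bar\varphi$, one has $R(\bar\varphi) = \prod_{i=1}^{s-1} R(\varphi_i)$. Everything then reduces to the following \emph{key lemma}: if $N \lhd G$ is a $\varphi$-invariant central subgroup with $N$ and $G/N$ finitely generated torsion-free nilpotent, then $R(\varphi) = R(\varphi|_N)\cdot R(\bar\varphi)$.

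To prove the key lemma I would analyze the surjection $p_* : \mathcal{R}(\varphi) \twoheadrightarrow \mathcal{R}(\bar\varphi)$ induced by $p: G \to Q$ and show that every fibre has cardinality $R(\varphi|_N)$. Fixing a lift $g \in G$ of a representative $\bar g$, a standard lifting argument shows that every $\varphi$-class in $p_*^{-1}([\bar g]_{\bar\varphi})$ contains a representative of the form $gn$ with $n \in N$. Set
\[\theta_g(h) := g^{-1} h g\, \varphi(h)^{-1},\qquad F_g := \theta_g^{-1}(N).\]
A short computation using centrality of $N$ verifies that $\theta_g : F_g \to N$ is a group homomorphism with $\theta_g|_N = \Id - \varphi|_N$ (in additive notation on the abelian group $N$), and moreover that $gn \sim_\varphi gn'$ iff $n^{-1}n' \in \theta_g(F_g)$. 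Hence each fibre has cardinality $[N : \theta_g(F_g)]$.

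The crux is the identification $\theta_g(F_g) = (\Id - \varphi|_N)(N)$, which yields fibre cardinality $[N:(\Id - \varphi|_N)(N)] = R(\varphi|_N)$. If $R(\bar\varphi) = \infty$, surjectivity of $p_*$ immediately gives $R(\varphi) = \infty$ and the formula holds, so assume $R(\bar\varphi) < \infty$. Projecting the defining condition of $F_g$ to $Q$ gives $p(F_g) = \{\bar h \in Q : \bar h = \bar g\, \bar\varphi(\bar h)\, \bar g^{-1}\} = \Fix(c_{\bar g} \circ \bar\varphi)$. Since $c_{\bar g} \circ \bar\varphi$ differs from $\bar\varphi$ by an inner automorphism of $Q$, the elementary bijection $x \mapsto x\bar g$ of twisted conjugacy classes gives $R(c_{\bar g} \circ \bar\varphi) = R(\bar\varphi) < \infty$. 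Applying Theorem \ref{thm:equivalent statements R-inf} to the (finitely generated, torsion-free) nilpotent group $Q$ then forces $\Fix(c_{\bar g} \circ \bar\varphi) = \{1\}$: any nontrivial fixed element would descend to a nonzero fixed vector of an induced automorphism on some factor $\sqrt{\gamma_i(Q)}/\sqrt{\gamma_{i+1}(Q)}$, producing the eigenvalue $1$ and contradicting $R(c_{\bar g}\circ\bar\varphi) < \infty$. Thus $p(F_g) = 1$, $F_g = N$ and $\theta_g(F_g) = (\Id - \varphi|_N)(N)$. I expect the main obstacle to be precisely this last step: when $Q$ is nonabelian (i.e.\ $s \geq 3$), $F_g$ could \emph{a priori} be strictly larger than $N$, and ruling this out is what forces one to invoke Theorem \ref{thm:equivalent statements R-inf} together with the inner-automorphism trick above.
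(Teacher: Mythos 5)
The paper does not prove this theorem at all --- it is imported verbatim from the literature (\cite[Proposition 5]{dgo21}, \cite[Lemma 2.7]{roma11}) --- so there is no internal proof to compare against. Your reconstruction is correct and follows the standard argument of those references: induction on $s$ via the central subgroup $N=G_s$, the fibre analysis of $p_*:\mathcal{R}(\varphi)\twoheadrightarrow\mathcal{R}(\bar\varphi)$, and the homomorphism $\theta_g$ all check out, and you correctly isolated and closed the one genuinely delicate point, namely that $F_g=N$ (equivalently $\Fix(c_{\bar g}\circ\bar\varphi)=1$), which does require $R(\bar\varphi)<\infty$ together with the eigenvalue criterion of Theorem \ref{thm:equivalent statements R-inf} applied to the torsion-free quotient $Q$.
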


In order to express the Reidemeister number, we will frequently use the following notation
\[|\:\cdot\:|_{\infty}:\Z\to \N_{>0}\cup \{\infty\}:x\mapsto |x|_{\infty}:=\begin{cases}
	|x| &\text{ if } x\neq 0\\
	\infty &\text{ if } x=0 \end{cases}\]
where $|\:\cdot\:|$ denotes the absolute value.
Note that for a finitely generated torsion-free nilpotent group, we can apply Theorem \ref{thm:RN product} to the adapted lower central series of $G$. Combined with the well-known description of the Reidemeister spectrum for free abelian groups of finite rank (see e.g. \cite{gw09}) we obtain the next result.

\begin{lemma} \label{lemma:RN product abelian}
	Let $G$ be a finitely generated torsion-free 2-step nilpotent group and $\varphi\in \Aut(G)$, then
	\[R(\varphi)=R(\varphi_1)R(\varphi_2)=|\det(Id-\varphi_1)|_{\infty}\:|\det(Id-\varphi_2)|_{\infty}\]
	where $\varphi_1\in \Aut(G/\sqrt{\gamma_2(G)})$ and $\varphi_2\in \Aut(\sqrt{\gamma_2(G)})$ are the induced automorphisms on the factors of the adapted lower central series.
\end{lemma}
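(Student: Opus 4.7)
The plan is to apply Theorem \ref{thm:RN product} directly to the adapted lower central series
\[ G = \sqrt{\gamma_1(G)} \supseteq \sqrt{\gamma_2(G)} \supseteq \sqrt{\gamma_3(G)} = 1, \]
which is a central series of length two for any finitely generated torsion-free 2-step nilpotent group. This reduces the first equality of the statement to a verification of the two hypotheses of that theorem.

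First I would check the torsion-freeness hypothesis: as recalled in the paragraph preceding Definition \ref{def:I(n,m) groups}, the factors of the adapted lower central series of a finitely generated torsion-free nilpotent group are free abelian, hence in particular torsion-free. Next I would check $\varphi$-invariance: since each term $\sqrt{\gamma_i(G)}$ is a characteristic subgroup of $G$ (this too was recalled in the same paragraph), any automorphism $\varphi \in \Aut(G)$ satisfies $\varphi(\sqrt{\gamma_i(G)}) = \sqrt{\gamma_i(G)}$. With both hypotheses verified, Theorem \ref{thm:RN product} yields
\[ R(\varphi) = R(\varphi_1)\, R(\varphi_2), \]
where $\varphi_1$ and $\varphi_2$ are the induced automorphisms on $G/\sqrt{\gamma_2(G)}$ and $\sqrt{\gamma_2(G)}$ respectively.

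For the second equality, I would invoke the well-known formula for the Reidemeister number of an automorphism $\psi$ of a free abelian group $\Z^k$, namely
\[ R(\psi) = |\det(\Id - \psi)|_{\infty}, \]
attributed in the excerpt to \cite{gw09}. Since $G \in \I(n,m)$ for some $n,m$ gives $G/\sqrt{\gamma_2(G)} \cong \Z^n$ and $\sqrt{\gamma_2(G)} \cong \Z^m$, both $\varphi_1$ and $\varphi_2$ are automorphisms of finite-rank free abelian groups, and substituting the formula for each factor gives the claimed product.

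There is essentially no obstacle here: the lemma is a packaging result that combines two cited theorems (Theorem \ref{thm:RN product} and the abelian Reidemeister formula) with two structural facts (free-abelian factors and characteristicness) that have already been stated in the preceding exposition. The only mild care needed is in handling the convention $|\cdot|_\infty$, so that the identity remains meaningful (and evaluates to $\infty$) in the degenerate case where $1$ is an eigenvalue of $\varphi_1$ or $\varphi_2$; this is consistent with Theorem \ref{thm:equivalent statements R-inf}, which guarantees $R(\varphi) = \infty$ precisely in that situation.
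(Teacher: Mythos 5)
Your proposal is correct and follows exactly the route the paper intends: the lemma is stated immediately after the remark that Theorem \ref{thm:RN product} applies to the adapted lower central series (whose terms are characteristic and whose factors are free abelian), combined with the formula $R(\psi)=|\det(\Id-\psi)|_{\infty}$ for free abelian groups from \cite{gw09}. Your verification of the two hypotheses and the handling of the $|\cdot|_{\infty}$ convention match the paper's (implicit) argument.
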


The next lemma allows us to more easily check that an endomorphism on a finitely generated torsion-free nilpotent group is in fact an automorphism.

\begin{proposition}\label{prop:phi1 aut ==> phi aut}
	Let $G$ be a finitely generated torsion-free nilpotent group. If $\varphi\in \End(G)$ and $\varphi_1\in \Aut(G/\sqrt{\gamma_2(G)})$, then $\varphi\in \Aut(G)$.
\end{proposition}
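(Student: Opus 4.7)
My plan is to prove $\varphi\in\Aut(G)$ by induction on the nilpotency class $c$ of $G$. The base case $c=1$ is immediate: then $\sqrt{\gamma_2(G)}=1$, so $\varphi=\varphi_1$ is an automorphism by hypothesis. For the inductive step $c\geq 2$, since every $\sqrt{\gamma_i(G)}$ is characteristic, $\varphi$ descends to an endomorphism $\bar\varphi$ of the torsion-free nilpotent quotient $\bar G:=G/\sqrt{\gamma_c(G)}$, which has class at most $c-1$. The identification $\bar G/\sqrt{\gamma_2(\bar G)}\cong G/\sqrt{\gamma_2(G)}$ (obtained by checking $\sqrt{\gamma_2(\bar G)}=\sqrt{\gamma_2(G)}/\sqrt{\gamma_c(G)}$) sends $\bar\varphi_1$ to $\varphi_1$, so the inductive hypothesis gives $\bar\varphi\in\Aut(\bar G)$. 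Applying the five-lemma to
\[
1\longrightarrow \sqrt{\gamma_c(G)}\longrightarrow G\longrightarrow \bar G\longrightarrow 1
\]
reduces everything to showing that $\varphi_c:=\varphi|_{\sqrt{\gamma_c(G)}}$ is an automorphism of the free abelian group $\sqrt{\gamma_c(G)}$.

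The tool for this last step is the iterated $c$-fold commutator map
\[
\mu\colon \bigl(G/\sqrt{\gamma_2(G)}\bigr)^{\otimes c}\longrightarrow \gamma_c(G),\qquad \bar g_1\otimes\dots\otimes\bar g_c\mapsto [g_1,[g_2,\cdots[g_{c-1},g_c]\cdots]].
\]
Since $\gamma_{c+1}(G)=1$, iterated $c$-commutators are multilinear in each slot, and an entry from $\sqrt{\gamma_2(G)}$ produces a torsion element of the torsion-free group $\gamma_c(G)$ and therefore vanishes, so $\mu$ is well-defined; it is surjective by the definition of $\gamma_c(G)$, and the identity $\varphi([g_1,\dots,g_c])=[\varphi(g_1),\dots,\varphi(g_c)]$ makes $\mu$ intertwine $\varphi_1^{\otimes c}$ with $\varphi_c|_{\gamma_c(G)}$. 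Because $\gamma_c(G)$ is torsion-free, $\ker\mu$ is a pure subgroup of the free abelian group $\bigl(G/\sqrt{\gamma_2(G)}\bigr)^{\otimes c}$, hence a direct summand. In a compatible $\Z$-basis the automorphism $\varphi_1^{\otimes c}$ (which preserves $\ker\mu$ by the intertwining) is block upper-triangular with $\det=\pm 1$, so both diagonal blocks---the restriction to $\ker\mu$ and the induced map $\varphi_c|_{\gamma_c(G)}$---have determinant $\pm 1$. Since $\gamma_c(G)$ has finite index in $\sqrt{\gamma_c(G)}$ and the determinant of an endomorphism is invariant under restriction to a finite-index sublattice (via a conjugation $MFM^{-1}$ on matrices), we conclude $\det\varphi_c=\pm 1$, so $\varphi_c\in\Aut(\sqrt{\gamma_c(G)})$.

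The main obstacle I anticipate is the careful bookkeeping to set up $\mu$: the multilinearity of long iterated commutators modulo $\gamma_{c+1}$ requires Hall's commutator calculus, and factoring through $G/\sqrt{\gamma_2(G)}$ rather than $G/\gamma_2(G)$ depends crucially on the torsion-freeness of $\gamma_c(G)$ to kill the $\sqrt{\gamma_2(G)}/\gamma_2(G)$ contributions. Once $\mu$ and its intertwining with $\varphi_1^{\otimes c}$ are in place, the remaining ingredients---the direct-summand decomposition of a pure subgroup of a free abelian group, the block-triangular splitting of the determinant, and the finite-index determinant invariance for $\Z$-lattices---are routine.
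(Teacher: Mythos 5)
Your argument is correct, and the overall induction on the nilpotency class $c$ via $\overline{G}=G/\sqrt{\gamma_c(G)}$ matches the paper's; but you handle the decisive step --- that $\varphi$ restricts to an automorphism of $\sqrt{\gamma_c(G)}$ --- by a genuinely different route. The paper shows directly that $\varphi(\gamma_c(G))=\gamma_c(G)$: writing arbitrary $g_1\in G$ and $g_2\in\gamma_{c-1}(G)$ as $\varphi(g_1')h_1$ and $\varphi(g_2')h_2$ with $h_1,h_2$ in the central subgroup $\sqrt{\gamma_c(G)}$ (possible because $\overline{\varphi}$ is onto) gives $[g_1,g_2]=\varphi([g_1',g_2'])$; it then quotes the fact that an endomorphism of $\Z^n$ carrying a finite-index subgroup \emph{onto} itself is an automorphism, and finishes by proving $\varphi$ surjective and invoking Hopficity (you use the short five lemma instead --- equally valid). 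Your substitute linearizes the problem through the $c$-fold commutator map $\mu$ on $\bigl(G/\sqrt{\gamma_2(G)}\bigr)^{\otimes c}$ and a block-triangular determinant count over a splitting $\ker\mu\oplus C$, then transfers $\det=\pm1$ from $\gamma_c(G)$ to $\sqrt{\gamma_c(G)}$ by finite-index invariance of the determinant. This buys independence from the Hopfian property and from the quoted lemma of Senden, at the cost of more commutator calculus: in particular, the surjectivity of $\mu$ is not literally ``by definition'' of $\gamma_c(G)$ --- you need that $\gamma_c(G)$ is generated by weight-$c$ left-normed commutators (an inductive consequence of bilinearity into the central subgroup $\gamma_c(G)$) plus a Jacobi-identity rewriting to reach your right-normed convention --- and the subgroups $\sqrt{\gamma_i(G)}$ must be fully invariant, not merely characteristic, for an arbitrary endomorphism to descend. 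Both facts are standard and true, so these are presentational rather than mathematical gaps.
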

\begin{proof}
	By (the proof of) Proposition 6.1.1 in \cite{send23}, we obtain the following result:
	\begin{equation}\label{eq:claim}
		\text{If $\psi\in \End(\Z^n)$ and $A\subset_{\text{fin}}\Z^n$ such that $\psi(A)=A$, then $\psi\in \Aut(\Z^n)$.}
	\end{equation}
	We use induction on the nilpotency class $c\in \N_{>0}$ of $G$ together with result $(\ref{eq:claim})$ to prove the proposition. If $c=1$, then there is nothing to prove, since in that case $\varphi=\varphi_1\in \Aut(G)$.\\
	Let $c>1$ and assume that the claim holds for finitely generated torsion-free nilpotent groups of nilpotency class $\leq c-1$. Consider the group $H:=G/\sqrt{\gamma_c(G)}$. Note that
	\[ \gamma_i(H)=\frac{\gamma_i(G)\sqrt{\gamma_c(G)}}{\sqrt{\gamma_c(G)}}\]
	for all $i=1,\dots,c$. Hence, $H$ is a finitely generated torsion-free nilpotent group of nilpotency class $\leq c-1$. Since $\varphi\in \End(G)$ and $\varphi(\sqrt{\gamma_c(G)})\subset \sqrt{\gamma_c(G)}$ (see for example \cite[Lemma 1.1.2 (1)]{deki96}) we can consider the induced morphism $\overline{\varphi}\in\End(H)$. By \cite[Lemma 1.1.4]{deki96} it holds that $\sqrt[H]{\gamma_2(H)}= \sqrt[G]{\gamma_2(G)}/\sqrt[G]{\gamma_c(G)}$. Hence, we obtain that
	\[ \frac{H}{\sqrt[H]{\gamma_2(H)}}=\frac{H}{\sqrt[G]{\gamma_2(G)}/\sqrt[G]{\gamma_c(G)}}\cong \frac{G}{\sqrt[G]{\gamma_2(G)}}.\]
	We consider the induced morphism $\overline{\varphi}_1\in \End(H/\sqrt{\gamma_2(H)})$. By the commutativity of the next diagram and since $\varphi_1$ is an automorphism, also $\overline{\varphi}_1$ is an automorphism.
	\[ \begin{tikzcd}
		\frac{H}{\sqrt{\gamma_2(H)}} \arrow[r, "\overline{\varphi}_1"]          & \frac{H}{\sqrt{\gamma_2(H)}}                     \\
		\frac{G}{\sqrt{\gamma_2(G)}} \arrow[r, "\varphi_1"] \arrow[u, "\cong"'] & \frac{G}{\sqrt{\gamma_2(G)}} \arrow[u, "\cong"']
		\arrow[from=1-2, to=2-1, pos=.4, phantom, "\circlearrowleft"]
	\end{tikzcd} \]
	Hence, by the induction hypothesis it follows that $\overline{\varphi}\in \Aut(G/\sqrt{\gamma_c(G)})$.\\
	Now we argue that $\varphi_c\in \Aut(\sqrt{\gamma_c(G)})$. Note that $\gamma_c(G)\lhd_{\text{fin}} \sqrt{\gamma_c(G)}\cong \Z^n$ (for some $n\in \N_{>0}$) and $\varphi(\gamma_c(G))\subset \gamma_c(G)$. Thus by result $(\ref{eq:claim})$ it suffices to prove that $\varphi(\gamma_c(G))=\gamma_c(G)$. Fix any $g_1\in G$ and $g_2\in \gamma_{c-1}(G)$. Since $\overline{\varphi}$ is an automorphism, there exists some $g_1'\in G$ and $h_1\in \sqrt{\gamma_c(G)}$ such that $g_1=\varphi(g_1')h_1$. Moreover, since $\overline{\varphi}(\gamma_{c-1}(H))=\gamma_{c-1}(H)$ and $g_2\sqrt{\gamma_c(G)}\in \gamma_{c-1}(H)$ there are $g_2'\in \gamma_{c-1}(G)\sqrt{\gamma_c(G)}$ and $h_2\in \sqrt{\gamma_c(G)}$ such that $g_2=\varphi(g_2')h_2$. Since $G$ is a finitely generated torsion-free $c$-step nilpotent group, it follows that $\sqrt{\gamma_c(G)}\subset Z(G)$. Hence, we obtain that
	\[ [g_1,g_2]=[\varphi(g_1')h_1,\varphi(g_2')h_2]=[\varphi(g_1'),\varphi(g_2')]=\varphi([g_1',g_2'])\in \varphi([G,\gamma_{c-1}(G)\sqrt{\gamma_c(G)}])=\varphi(\gamma_c(G)).\]
	So we can conclude that $\varphi(\gamma_c(G))=\gamma_c(G)$ and thus by result $(\ref{eq:claim})$ it follows that $\varphi_c\in \Aut(\sqrt{\gamma_c(G)})$.\\
    Any finitely generated torsion-free nilpotent group is \textit{Hopfian} (i.e.\ any epimorphism is an automorphism) and thus it suffices to prove that $\varphi$ is surjective. Fix any $g\in G$. Since $\overline{\varphi}$ is an automorphism, there exists some $g'\in G$ such that $\varphi(g')\sqrt{\gamma_c(G)}=g\sqrt{\gamma_c(G)}$. Using that $\varphi_c\in \Aut(\sqrt{\gamma_c(G)})$ we can take some $h\in \sqrt{\gamma_c(G)}$ such that $g=\varphi(g')\varphi(h)=\varphi(g'h)$. Hence, $\varphi$ is a surjective morphism and thus it is an automorphism.
\end{proof}

\section{Reidemeister spectrum of groups in \texorpdfstring{$\I(n,1)$}{I(n,1)}}
In this section, we determine the Reidemeister spectrum of groups in $\I(n,1)$. 

\begin{proposition}[{\cite[Proposition 5 (p. 265)]{sega83}}]\label{prop:description I(n,1)}
	Let $n\in \N_{>1}$ and $G\in\I(n,1)$, then $G$ is isomorphic to exactly one of the following groups:
	\begin{enumerate}
		\item[(i)] If $n=2r$:
		\[ G(d_1,d_2,\dots,d_r):=\Bigg\langle x_1,\dots,x_r,y_1,\dots,y_r,z\: \Biggg\vert \: \begin{array}{ll}
			[x_i,y_j]=[x_i,x_j]=[y_i,y_j]=1 &\text{ if } i\neq j \\
			\relax [x_i,y_i]=z^{d_i} & \text{ for all } i \\
			z \text{ is central} & 
		\end{array} \Bigg\rangle\]
		where $d_1\vert d_2\vert\dots\vert d_r$, each $d_i\in \N$ and $d_1\neq 0$.
		\item[(ii)] If $n=2r+1$:
		\[ G(d_1,d_2,\dots,d_r)\times \Z \]
		where $d_1\vert d_2\vert\dots\vert d_r$, each $d_i\in \N$ and $d_1\neq 0$.
	\end{enumerate}
\end{proposition}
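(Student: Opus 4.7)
My plan is to reduce the classification to a normal-form theorem for alternating $\Z$-bilinear forms on $\Z^n$ via the commutator map. Fix a generator $z$ of $\sqrt{\gamma_2(G)}\cong\Z$. Because $G$ is $2$-step nilpotent, $\sqrt{\gamma_2(G)}\subseteq Z(G)$, so by Lemma \ref{lemma:bilinearity [.,.] 2-nilpotent} the commutator depends only on classes modulo $\sqrt{\gamma_2(G)}$ and is $\Z$-bilinear. Hence one obtains an alternating $\Z$-bilinear pairing
\[\omega:\Z^n\times\Z^n\to\Z,\qquad [a,b]=z^{\omega(\bar a,\bar b)},\]
where the bar denotes the image in $G/\sqrt{\gamma_2(G)}\cong\Z^n$. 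The isomorphism class of $G$ is then controlled by the equivalence class of $\omega$ under the action of $\Gl_n(\Z)$ on the basis of $\Z^n$ together with the sign change $z\mapsto z^{-1}$ coming from the choice of generator of $\sqrt{\gamma_2(G)}$.

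Next, I would invoke the skew-symmetric analogue of Smith normal form: every alternating form on $\Z^n$ admits a basis in which its Gram matrix is block-diagonal with $2\times 2$ blocks $\bigl(\begin{smallmatrix}0 & d_i\\ -d_i & 0\end{smallmatrix}\bigr)$, $d_1\mid d_2\mid\cdots\mid d_r$ and $d_i\geq 0$, followed by a zero block of size $n-2r\in\{0,1\}$; the $d_i$'s are uniquely determined as the elementary divisors of the matrix. This is the substantive input and is proved by the standard elementary-divisor induction: pick a smallest nonzero entry and simultaneously clear its row and column by symplectic operations preserving antisymmetry.

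Third, I would lift the resulting symplectic basis to generators $x_i,y_i\in G$ (and, in the odd case $n=2r+1$, a lift $w\in G$ of the single radical direction). By construction $[x_i,y_i]=z^{d_i}$, all other commutators among the lifts vanish, and $w$ is central, so the presented group $G(d_1,\ldots,d_r)$ — respectively $G(d_1,\ldots,d_r)\times\Z$ — surjects onto $G$ by matching generators. Both groups are torsion-free nilpotent of Hirsch length $n+1$ and this surjection induces isomorphisms on the factors $\Z^n$ and $\Z$ of the adapted lower central series, so a short induction on the series forces the kernel to be trivial. Uniqueness of the tuple $(d_1,\ldots,d_r)$ follows from uniqueness of the elementary divisors of $\omega$.

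The main obstacle is Step~2, the skew-symmetric Smith normal form over $\Z$; the rest is bookkeeping that exploits the fact that morphisms between torsion-free $2$-step nilpotent groups are controlled by their induced maps on the two factors of the adapted lower central series.
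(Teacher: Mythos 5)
The paper offers no proof of this proposition---it is quoted directly from Segal \cite[Proposition 5, p.~265]{sega83}---and your argument is essentially the one given there: the commutator pairing turns $G/\sqrt{\gamma_2(G)}\cong\Z^n$ into an alternating $\Z$-bilinear form, the skew-symmetric Smith normal form supplies the symplectic basis and the invariant elementary divisors $d_1\mid d_2\mid\dots\mid d_r$, and the lifting/Hirsch-length argument identifies $G$ with the presented normal-form group. Your sketch is correct; the only point worth making explicit is that obtaining a residual zero block of size $n-2r\in\{0,1\}$ requires allowing trailing $d_i=0$ (only $d_1\neq 0$ is forced, since $\gamma_2(G)\cong\Z$ is nontrivial), which is exactly the convention adopted in the statement.
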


Now we determine the Reidemeister spectrum of both classes of groups, starting with the case of even $n$.

\begin{theorem}\label{thm:Spec I(2r,1)}
	Let $n\in \N_{>1}$ with $n=2r$ and $G\in\I(n,1)$, then
	\[ \Spec(G)=2\N_{>0}\cup \{\infty\}. \]
\end{theorem}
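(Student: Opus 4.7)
The plan is to prove the two inclusions $\Spec(G) \subseteq 2\N_{>0} \cup \{\infty\}$ and $2\N_{>0} \cup \{\infty\} \subseteq \Spec(G)$ separately. For the first, I use that $\sqrt{\gamma_2(G)} = \langle z\rangle \cong \Z$, so any induced automorphism $\varphi_2$ on the bottom factor equals $\pm \Id$ and hence $R(\varphi_2) = |1-(\pm 1)|_{\infty} \in \{2,\infty\}$. Lemma \ref{lemma:RN product abelian} then gives $R(\varphi) = R(\varphi_1)R(\varphi_2) \in 2\N_{>0} \cup \{\infty\}$, with finite values occurring only when $\varphi_2 = -\Id$.

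For the reverse inclusion I construct, for each $k \in \N_{>0}$, an automorphism $\varphi$ with $R(\varphi) = 2k$. Fix the basis $\bar{x}_1,\bar{y}_1,\ldots,\bar{x}_r,\bar{y}_r$ of $G/\sqrt{\gamma_2(G)} \cong \Z^{2r}$; in this ordering the commutator map induces an alternating bilinear form $\omega$ on $\Z^{2r}$ whose matrix is the block-diagonal $\diag(d_1 J,\ldots,d_r J)$ with $J = \left(\begin{smallmatrix} 0 & 1 \\ -1 & 0 \end{smallmatrix}\right)$. Because $z$ is central and $G$ is $2$-step nilpotent, a pair $(M,\epsilon) \in \Gl(2r,\Z) \times \{\pm 1\}$ representing $(\varphi_1,\varphi_2)$ extends, on any chosen lifts of the generators, to an endomorphism of $G$ if and only if $M^{\top}\omega M = \epsilon\omega$; Proposition \ref{prop:phi1 aut ==> phi aut} then automatically upgrades this endomorphism to an automorphism, since $\varphi_1$ is invertible by construction.

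The crucial observation is that for a block-diagonal $M = \diag(M_1,\ldots,M_r)$ with $M_i \in \Gl(2,\Z)$, the identity $M_i^{\top} J M_i = \det(M_i)J$ reduces the constraint $M^{\top}\omega M = -\omega$ to the single requirement $\det M_i = -1$ for every $i$, \emph{independently of the $d_i$}. Combined with $\det(I-M_i) = 1 - \text{tr}(M_i) + \det(M_i) = -\text{tr}(M_i)$ for such $M_i$, this gives $|\det(I-M)|_{\infty} = \prod_{i=1}^r |\text{tr}(M_i)|_{\infty}$. Choosing $M_1 = \left(\begin{smallmatrix} k-1 & 1 \\ k & 1 \end{smallmatrix}\right)$ (trace $k$, determinant $-1$) together with $M_i = \left(\begin{smallmatrix} 0 & 1 \\ 1 & 1 \end{smallmatrix}\right)$ (trace $1$, determinant $-1$) for $i \geq 2$, and $\epsilon = -1$, produces an automorphism $\varphi$ with $R(\varphi_1) = k$ and hence $R(\varphi) = 2k$. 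The value $\infty$ is realized trivially by any $\varphi$ with $\varphi_2 = \Id$.

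The step I expect to be the main potential obstacle is compatibility of the anti-symplectic constraint $M^{\top}\omega M = -\omega$ with the divisibility pattern $d_1 \mid d_2 \mid \cdots \mid d_r$; a priori, one might worry that unequal $d_i$ force unwanted integrality conditions on $M$ and restrict the attainable values of $\det(I-M)$. The block-diagonal ansatz is exactly what sidesteps this: because $M_i^{\top}(d_i J) M_i = d_i \det(M_i) J$, the factor $d_i$ cancels from both sides and the $d_i$ impose no restriction whatsoever on the individual blocks $M_i$.
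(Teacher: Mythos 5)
Your proposal is correct and follows essentially the same route as the paper: the upper bound $\Spec(G)\subseteq 2\N_{>0}\cup\{\infty\}$ comes from $R(\varphi_2)\in\{2,\infty\}$ on $\sqrt{\gamma_2(G)}\cong\Z$ together with Lemma~\ref{lemma:RN product abelian}, and the lower bound from an explicit automorphism inverting $z$ and acting on each pair $(x_i,y_i)$ by a determinant $-1$ integer matrix, upgraded to an automorphism via Proposition~\ref{prop:phi1 aut ==> phi aut}. Your anti-symplectic reformulation $M^{\top}\omega M=-\omega$ is a cleaner packaging of the paper's direct relation check, and the paper's map is in fact a special case of your ansatz: in your interleaved basis its matrix is block-diagonal with blocks $\left(\begin{smallmatrix}0&1\\1&-k_i\end{smallmatrix}\right)$ of determinant $-1$ and trace $-k_i$.
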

\begin{proof}
	By Proposition \ref{prop:description I(n,1)} $(i)$ we can assume that $G=G(d_1,\dots,d_r)$ with $d_1\vert d_2\vert\dots\vert d_r$, each $d_i\in \N$ and $d_1\neq 0$. Note that $\sqrt{\gamma_2(G)}=\langle z\rangle \cong \Z$.\\
	Fix any automorphism $\varphi\in \Aut(G)$ wit $R(\varphi)<\infty$. By Lemma \ref{lemma:RN product abelian} we know that $R(\varphi)=R(\varphi_1)R(\varphi_2)$. Since the only finite Reidemeister number of $\Z$ is 2 (see for example \cite{roma11}), it holds that $R(\varphi_2)=2$. Hence, $R(\varphi)\in 2\N_{>0}\cup \{\infty\}$.\\
	Note that the identity automorphism has infinite Reidemeister number. Fix $k_1,\dots,k_r\in \N_{>0}$ and define the map $\varphi$ on $G$ that is induced by
	\[ \varphi(x_i)=y_i,\quad \varphi(y_i)=x_iy_i^{-k_i},\quad \varphi(z)=z^{-1}.\]
	Using Lemma \ref{lemma:bilinearity [.,.] 2-nilpotent}, we obtain that
	\[ [\varphi(x_i),\varphi(y_i)]=[y_i,x_iy_i^{-k_i}]=[y_i,x_i][y_i,y_i^{-k_i}]=z^{-d_i}=\varphi(z)^{d_i}.\]
	One can easily check that the other relations are also preserved and thus $\varphi\in \End(G)$. The matrix of $\varphi_1\in \Aut(G/\sqrt{\gamma_2(G)})$ with respect to the $\Z$-basis $\{x_1\sqrt{\gamma_2(G)},\dots,x_r\sqrt{\gamma_2(G)},y_1\sqrt{\gamma_2(G)},\dots,y_r\sqrt{\gamma_2(G)}\}$ equals
	\[ \begin{pmatrix}
		0 & \mathds{1}_r \\
		\mathds{1}_r & -\diag(k_1,\dots,k_r)
	\end{pmatrix} \]
	which is an invertible integer matrix. Hence, Proposition \ref{prop:phi1 aut ==> phi aut} tells us that $\varphi\in \Aut(G)$. Note that $\varphi_2\in \Aut(\sqrt{\gamma_2(G)})$ is the inversion map on $\Z$ and thus by Lemma \ref{lemma:RN product abelian} we obtain that
	\[ R(\varphi)=R(\varphi_1)R(\varphi_2)=\left|\det\left(\mathds{1}_n-\begin{pmatrix}
		0 & \mathds{1}_r \\
		\mathds{1}_r & -\diag(k_1,\dots,k_r)
	\end{pmatrix}\right)\right|_{\infty}|\det(1-(-1))|_{\infty}=2k_1\dots k_r.\]
	By taking for example $k_2=\dots=k_r=1$, we obtain all of $2\N_{>0}$ as Reidemeister number of an automorphism of $G$. Hence, we proved that $\Spec(G)=2\N_{>0}\cup \{\infty\}$.
\end{proof}

Now we consider the case where $n=2r+1$ is odd.

\begin{theorem}\label{thm:Spec I(2r+1,1)}
	Let $n\in \N_{>1}$ with $n=2r+1$ and $G\in\I(n,1)$. Fix $d_1,\dots,d_r\in \N$ (with $d_1\neq 0$ and $d_1|\dots|d_r$) such that $G\cong G(d_1,\dots,d_r)\times \Z$, then
	\[ \Spec(G)=\begin{cases}
		2\N_{>0}\cup \{\infty\} &\text{ if } d_r=0\\
		4\N_{>0}\cup \{\infty\} &\text{ if } d_r\neq 0
	\end{cases}. \]
\end{theorem}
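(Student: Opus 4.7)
The strategy is to reduce everything to the two factors of the adapted lower central series of $G$. Since $\sqrt{\gamma_2(G)}=\langle z\rangle\cong \Z$, Lemma \ref{lemma:RN product abelian} gives $R(\varphi)=R(\varphi_1)R(\varphi_2)$, and the fact that $\Spec(\Z)=\{2,\infty\}$ forces $R(\varphi_2)=2$ (hence $\varphi(z)=z^{-1}$) whenever $R(\varphi)<\infty$. In particular every finite Reidemeister number of $G$ is automatically even, so in both cases $\Spec(G)\subseteq 2\N_{>0}\cup\{\infty\}$ is free of charge, and the identity of $G$ witnesses $\infty\in \Spec(G)$. It remains to decide exactly which even positive integers actually occur.

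\emph{Case $d_r=0$.} Let $k$ be the largest index with $d_k\neq 0$, so $1\le k<r$ (as $d_1\neq 0$ and a zero in the divisibility chain forces all subsequent $d_i$ to be zero). Since $x_{k+1},y_{k+1},\dots,x_r,y_r$ then commute with everything in sight, the presentation splits as $G\cong G(d_1,\dots,d_k)\times \Z^{2(r-k)+1}$. The first factor lies in $\I(2k,1)$, so by Theorem \ref{thm:Spec I(2r,1)} its spectrum equals $2\N_{>0}\cup\{\infty\}$, while the second has rank $\ge 3$ and spectrum $\N_{>0}\cup\{\infty\}$ (see \cite{gw09}). Taking a product automorphism $\varphi=\psi\times \chi$ with $R(\psi)=2$ and $R(\chi)=m$ arbitrary in $\N_{>0}$ gives $R(\varphi)=2m$, realising every element of $2\N_{>0}$ and completing this case.

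\emph{Case $d_r\neq 0$.} Here every commutator $[x_i,y_i]=z^{d_i}$ is non-trivial, and a short calculation with Lemma \ref{lemma:bilinearity [.,.] 2-nilpotent} shows $Z(G)=\langle z,w\rangle\cong \Z^2$. Hence $\varphi(w)\in Z(G)$, say $\varphi(w)=z^pw^q$, and invertibility of $\varphi_1$ over $\Z$ forces $q=\pm 1$; the choice $q=1$ would create an eigenvalue $1$ on $\langle \bar w\rangle$, contradicting $R(\varphi)<\infty$, so $q=-1$. In the basis $(\bar x_i,\bar y_i;\bar w)$ the matrix of $\varphi_1$ then has block form $\bigl(\begin{smallmatrix}A&0\\ B&-1\end{smallmatrix}\bigr)$, so $\det(I-\varphi_1)=2\det(I-A)$ and therefore $R(\varphi)=4|\det(I-A)|_{\infty}\in 4\N_{>0}\cup\{\infty\}$. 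To realise every multiple of $4$ I mimic the construction in Theorem \ref{thm:Spec I(2r,1)}: set $\varphi(x_i)=y_i$, $\varphi(y_i)=x_iy_i^{-k_i}$, $\varphi(z)=z^{-1}$, $\varphi(w)=w^{-1}$; all defining relations are preserved, $\varphi_1$ is manifestly invertible, and Proposition \ref{prop:phi1 aut ==> phi aut} promotes $\varphi$ to an automorphism with $R(\varphi)=4k_1\cdots k_r$, so choosing $k_2=\dots=k_r=1$ produces all of $4\N_{>0}$. The main obstacle I expect is precisely this last case: the identification $Z(G)=\langle z,w\rangle$ and the resulting forced block structure of $\varphi_1$ are what upgrade the trivial ``multiple of $2$'' obstruction to the sharper ``multiple of $4$'' conclusion, whereas $d_r=0$ is essentially free once the direct factor is split off.
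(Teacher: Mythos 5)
Your proof is correct and follows essentially the same strategy as the paper: force $\varphi(z)=z^{-1}$, identify $Z(G)\cong\Z^2$ when $d_r\neq 0$ to upgrade the bound from $2\N_{>0}$ to $4\N_{>0}$, and realise all values with the same explicit automorphisms $\varphi(x_i)=y_i$, $\varphi(y_i)=x_iy_i^{-k_i}$. The only (harmless) differences are cosmetic: in the case $d_r=0$ you split off all trivial pairs at once instead of one, and for the divisibility-by-$4$ bound you read off the block structure of $\varphi_1$ directly rather than applying the product formula to the central series $G\supset Z(G)\supset 1$ as the paper does.
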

\begin{proof}
	Without loss of generality we assume that $G=G(d_1,\dots,d_r)\times \Z$ and we denote with $u$ a generator of the $\Z$-factor.\\
	We first consider the case where $d_r=0$ (and thus, since $d_1\neq 0$, $r>1$). Note that then
	\begin{align*}
		G&=\Bigg\langle x_1,\dots,x_r,y_1,\dots,y_r,z,u\: \Biggg\vert \: \begin{array}{ll}
			[x_i,y_j]=[x_i,x_j]=[y_i,y_j]=1 &\text{ if } i\neq j \\
			\relax [x_i,y_i]=z^{d_i} & \text{ for all } i\leq r-1 \\
			x_r,y_r,u,z \text{ is central} & 
		\end{array} \Bigg\rangle\\
	&=G(d_1,\dots,d_{r-1})\times \Z^3
	\end{align*}
	where $\Z^3$ is generated by $x_r,y_r$ and $u$. Moreover, $\sqrt{\gamma_2(G)}=\langle z\rangle$. Hence, completely similar as in the proof of Theorem \ref{thm:Spec I(2r,1)}, we obtain that $\Spec(G)\subset 2\N_{>0}\cup \{\infty\}$. For the other inclusion, take any $k\in \N_{>0}$ arbitrary. By Theorem \ref{thm:Spec I(2r,1)} we can fix some $\psi\in \Aut(G(d_1,\dots,d_{r-1}))$ such that $R(\psi)=2$. Since $\Spec(\Z^3)=\N_{>0}\cup \{\infty\}$ (see for example \cite[Section 3]{roma11}), we can fix some $\psi'\in \Aut(\Z^3)$ such that $R(\psi')=k$. By Proposition 2.4 in \cite{send21} it follows that the automorphism $\varphi:=\psi\times \psi'\in \Aut(G)$ has Reidemeister number $R(\varphi)=R(\psi)R(\psi')=2k$. Hence, we argued that $\Spec(G)=2\N_{>0}\cup \{\infty\}$.

    We now consider the case $d_r\neq 0$ (and thus $d_i\neq 0$ for all $i$, since $d_1|\dots|d_r$). Fix some automorphism $\varphi\in \Aut(G)$ with $R(\varphi)<\infty$. Since $\sqrt{\gamma_2(G)}=\langle z\rangle$ and $R(\varphi_2)<\infty$, it must hold that $\varphi(z)=z^{-1}$ and thus $R(\varphi_2)=2$. Note that since $d_i\neq 0$ for all $i$, it holds that $Z(G)=\langle u,z\rangle\cong \Z^2$. Since $Z(G)$ is a characteristic subgroup and $\varphi\vert_{Z(G)}$ is an automorphism, it thus follows that $\varphi(u)=u^{\alpha}z^{\beta}$ with $\alpha\in\{-1,1\}$ and $\beta\in \Z$. If we denote with $\olvarphi\in \Aut(G/Z(G))$ the induced automorphism, then we obtain by applying Theorem \ref{thm:RN product} to $G\supset Z(G)\supset 1$ that
	\[ R(\varphi)=R(\varphi\vert_{Z(G)})R(\olvarphi)=R(\olvarphi)\left|\det\left(\mathds{1}_2-\begin{pmatrix}
		\alpha & 0 \\
		\beta & -1
	\end{pmatrix}\right)\right|_{\infty}=2|1-\alpha|_{\infty}R(\olvarphi).\]
	Since $R(\varphi)<\infty$, we thus obtain that $\alpha=-1$ and $R(\varphi)=4R(\olvarphi)\in 4\N_{>0}\cup \{\infty\}$.\\
	Note that the identity morphism has infinite Reidemeister number. Fix some $k_1,\dots,k_r\in \N_{>0}$ and consider the map $\varphi$ on $G$ induced by
	\[ \varphi(x_i)=y_i,\quad \varphi(y_i)=x_iy_i^{-k_i},\quad \varphi(z)=z^{-1},\quad \varphi(u)=u^{-1}.\]
	One can check that $\varphi$ preserves the relations (using Lemma \ref{lemma:bilinearity [.,.] 2-nilpotent}) and thus it induces an endomorphism of $G$. The matrix of the induced map $\varphi_1\in \Aut(G/\sqrt{\gamma_2(G)})$ with respect to the $\Z$-basis $\{x_1\sqrt{\gamma_2(G)},\dots,x_r\sqrt{\gamma_2(G)},y_1\sqrt{\gamma_2(G)},\dots,y_r\sqrt{\gamma_2(G)},u\sqrt{\gamma_2(G)}\}$ equals
	\[ \begin{pmatrix}
		0 & \mathds{1}_r & 0 \\
		\mathds{1}_r & -\diag(k_1,\dots,k_r) & 0\\
		0 & 0 & -1
	\end{pmatrix} \]
	which is an invertible integer matrix. Hence, Proposition \ref{prop:phi1 aut ==> phi aut} tells us that $\varphi\in \Aut(G)$. By applying Lemma \ref{lemma:RN product abelian} we obtain that
	\[ R(\varphi)=R(\varphi_1)R(\varphi_2)=\left|\det\left(\mathds{1}_n-\begin{pmatrix}
		0 & \mathds{1}_r & 0 \\
		\mathds{1}_r & -\diag(k_1,\dots,k_r) & 0\\
		0 & 0 & -1
	\end{pmatrix}\right)\right|_{\infty}|1-(-1)|_{\infty}=4k_1\dots k_r.\]
	We can now for example take $k_2=\dots=k_r=1$ to obtain an automorphism $\varphi\in \Aut(G)$ with $R(\varphi)=4k_1$. Since $k_1\in \N_{>0}$ can be taken arbitrary, we proved that $\Spec(G)=4\N_{>0}\cup \{\infty\}$.
\end{proof}

\section{Hirsch length 5 or less}
Recall that if $G$ is a finitely generated torsion-free 2-step nilpotent group, then $G\in I(n,m)$ for some $n,m>0$ with $h(G)=n+m$ and $m\leq n(n-1)/2$. Hence, there do not exist such groups of Hirsch length 1 or 2 and the only ones of Hirsch length 3 and 4 belong to $\I(2,1)$ and $\I(3,1)$ respectively. By Theorem \ref{thm:Spec I(2r,1)} and \ref{thm:Spec I(2r+1,1)} we know that those of Hirsch length 3 have Reidemeister spectrum $2\N_{>0}\cup \{\infty\}$ and those of Hirsch length 4 have Reidemeister spectrum $4\N_{>0}\cup \{\infty\}$. Again by Theorem \ref{thm:Spec I(2r,1)}, we already know that all groups in $\I(4,1)$ have Reidemeister spectrum $2\N_{>0}\cup\{\infty\}$. Thus to understand the situation up to Hirsch length 5, we only need to consider the groups of $\I(3,2)$. The next lemma gives a presentation for any of those groups. In this section, we shorten the notation of the presentations and omit all trivial commutators of generators. E.g.\ in the statement of the following lemma, it is implicitly understood that $[x_2,x_3]=1$ and also $[x_i,z_j]=1$ (for $i=1,2,3$ and $j=1,2$).

\begin{lemma}
\label{lem:presentation I(3,2)}
	Let $G\in \I(3,2)$, then it can be presented via
	\[ G=\langle x_1,x_2,x_3,z_1,z_2\:\vert\: [x_1,x_2]=z_1^{\alpha}z_2^{\beta},[x_1,x_3]=z_2^{\gamma}\rangle\]
	with $\alpha,\gamma\in \Z\setminus\{0\}$ and $\beta\in \Z$.
\end{lemma}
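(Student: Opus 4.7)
The plan is to reduce the commutator structure of $G$ to the stated normal form via two independent changes of basis, first on $G/\sqrt{\gamma_2(G)}\cong\Z^3$ and then on $\sqrt{\gamma_2(G)}\cong\Z^2$. By Lemma \ref{lemma:bilinearity [.,.] 2-nilpotent}, and since $\sqrt{\gamma_2(G)}\subset Z(G)$, the commutator on $G$ descends to an alternating bilinear map, i.e.\ a linear map
\[
\Phi\colon \Lambda^2\bigl(G/\sqrt{\gamma_2(G)}\bigr)\longrightarrow \sqrt{\gamma_2(G)},
\]
which after fixing bases is a map $\Z^3\to\Z^2$. Its image is $\gamma_2(G)$, of rank $m=2$ since $G\in\I(3,2)$, so $\ker\Phi$ has rank $1$ inside $\Lambda^2\Z^3\cong \Z^3$; moreover it is a direct summand, as its quotient embeds in the torsion-free group $\Z^2$. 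I pick a primitive generator $\omega$ of $\ker\Phi$.

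The first key step is a Plücker-type decomposition: I want $\omega=v\wedge w$ with $\{v,w\}$ extending to a $\Z$-basis of $\Z^3$. This is specific to rank $3$ (essentially because $\Lambda^4\Z^3=0$), and I would prove it as follows. The linear form $\Z^3\to \Lambda^3\Z^3\cong\Z,\;u\mapsto \omega\wedge u$, is primitive (its values on a basis are, up to sign, the coefficients of $\omega$), so its kernel is a rank-$2$ direct summand of $\Z^3$ with a basis $\{v,w\}$ that extends to a $\Z$-basis $\{u,v,w\}$ of $\Z^3$. Expanding $\omega$ in the induced basis $\{v\wedge w,\,v\wedge u,\,w\wedge u\}$ of $\Lambda^2\Z^3$, and using $\omega\wedge v=\omega\wedge w=0$, gives $\omega=\pm v\wedge w$. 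Taking lifts $x_1,x_2,x_3\in G$ of $u,v,w$, the identity $\Phi(v\wedge w)=0$ becomes $[x_2,x_3]=1$.

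The commutators $[x_1,x_2]$ and $[x_1,x_3]$ then generate the rank-$2$ group $\gamma_2(G)$, so they are $\Q$-linearly independent in $\sqrt{\gamma_2(G)}$. In particular $[x_1,x_3]\ne 1$; I write $[x_1,x_3]=z^d$ with $z\in\sqrt{\gamma_2(G)}$ primitive and $d\ne 0$, set $z_2:=z$, and extend to a $\Z$-basis $\{z_1,z_2\}$ of $\sqrt{\gamma_2(G)}$, so that $[x_1,x_3]=z_2^\gamma$ with $\gamma=d\ne 0$ and $[x_1,x_2]=z_1^\alpha z_2^\beta$ for some $\alpha,\beta\in \Z$. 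The integer $\alpha$ is nonzero because otherwise both generators of $\gamma_2(G)$ would lie in the rank-$1$ subgroup $\langle z_2\rangle$, contradicting $h(\gamma_2(G))=2$. To confirm that these relations (together with the implicit central ones) give a presentation, I would check that the group $\tilde G$ they define is a central extension of $\Z^3$ by $\Z^2$, hence torsion-free $2$-step nilpotent of Hirsch length $5$, and that the canonical surjection $\tilde G\twoheadrightarrow G$ sending generators to generators induces isomorphisms on both $\sqrt{\gamma_2}$ and the quotient by it; this forces it to be an isomorphism. The main obstacle is the Plücker step: upgrading the rational decomposability of $\omega$ (automatic in rank $3$) to an honest integer decomposition $v\wedge w$ with $\{v,w\}$ a primitive pair.
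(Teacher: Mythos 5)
Your argument is correct, but it takes a genuinely different route from the paper. The paper first passes to $G/\langle z_2\rangle\in \I(3,1)$ and invokes the classification of Proposition~\ref{prop:description I(n,1)} to clear the $z_1$-components of $[x_1,x_3]$ and $[x_2,x_3]$, and then runs Euclid's algorithm explicitly, repeatedly replacing $x_1,x_2$ by products such as $x_1^k x_2$ to reduce the two exponents of $z_2$ until one of the relations $[x_1,x_3],[x_2,x_3]$ becomes trivial. You instead treat the commutator as a linear map $\Lambda^2\Z^3\to\Z^2$, observe that its kernel is a primitive $2$-vector $\omega$, and prove the integral Pl\"ucker decomposition $\omega=\pm v\wedge w$ with $\{v,w\}$ extending to a basis; choosing bases adapted to $\omega$ upstairs and to $[x_1,x_3]$ downstairs then produces the normal form in one step. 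Your key lemma is sound: the form $u\mapsto\omega\wedge u$ is primitive precisely because $\omega$ is, and the expansion of $\omega$ in the induced basis of $\Lambda^2\Z^3$ does force $\omega=\pm v\wedge w$ with coefficient $\pm1$ by primitivity. What your approach buys is a coordinate-free, self-contained argument that isolates exactly why $n=3$ is special (decomposability of $2$-vectors in rank $3$), and it avoids dependence on the $\I(3,1)$ classification; what the paper's approach buys is an entirely elementary computation with explicit generator changes that reuses an already-stated result. Both treatments leave the verification that the resulting relations form a complete presentation at the same (standard) level of detail, and your sketch of that step --- comparing $\tilde G$ and $G$ via the central series, or equivalently via Hirsch length and torsion-freeness of the kernel --- is adequate.
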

\begin{proof}
	Since $G\in \I(3,2)$ it has a presentation
	\[ G=\langle x_1,x_2,x_3,z_1,z_2\:\vert\: [x_1,x_2]=z_1^{s_{12}}z_2^{t_{12}},[x_1,x_3]=z_1^{s_{13}}z_2^{t_{13}},[x_2,x_3]=z_1^{s_{23}}z_2^{t_{23}}\rangle\]
	where $s_{ij},t_{ij}\in \Z$. Hence, $G/\langle z_2\rangle\in \I(3,1)$ and by 
 Proposition~\ref{prop:description I(n,1)} we may assume that  $G/\langle z_2\rangle$ has the following presentation
	\[ G/\langle z_2\rangle=\langle \overline{x_1},\overline{x_2},\overline{x_3},\overline{z_1}\:\vert\: [\overline{x_1},\overline{x_2}]=\overline{z_1}^{\alpha}\rangle \]
 for some $\alpha\neq 0$.
	Thus, we obtain that $G$ has a presentation (with $\beta = t_{12}$) of the form
	\[ G=\langle x_1,x_2,x_3,z_1,z_2\:\vert\: [x_1,x_2]=z_1^{\alpha}z_2^{\beta},[x_1,x_3]=z_2^{t_{13}},[x_2,x_3]=z_2^{t_{23}}\rangle.\]
	Now we can assume that $t_{13}$ and $t_{23}$ are both non-zero. Indeed, if only one of both is zero, the result trivially holds (by swapping the roles of $x_1$ and $x_2$ if necessary). If $t_{13}=t_{23}=0$, then one can prove that $G\not\in \I(3,2)$ (by for example using Proposition 6.2.3 in \cite{deki96}). Moreover, we can assume that $t_{13}>0$ by inverting the generator $z_2$ if necessary. For two integers $a,b\in \Z$ with $b> 0$, we denote with $a\text{ mod } b$ the remainder of the division of $a$ by $b$, i.e. $0\leq a\text{ mod } b < b$ and there exists a unique $q\in \Z$ such that $a=qb+(a\text{ mod } b)$. In particular, there exists a unique $k\in \Z$ such that $t_{23}+kt_{13}=(t_{23}\text{ mod } t_{13})$. We fix a new set of generators of $G$
	\[ x_1':=x_1,\: x_2':=x_1^kx_2,\: x_3':=x_3,\: z_1':=z_1,\: z_2':=z_2.\]
	Using the bilinearity of the commutator (see Lemma \ref{lemma:bilinearity [.,.] 2-nilpotent}) one can easily check that the new presentation (where we omit the accents) of $G$ becomes
	\[ G=\langle x_1,x_2,x_3,z_1,z_2\:\vert\: [x_1,x_2]=z_1^{\alpha}z_2^{\beta},[x_1,x_3]=z_2^{t_{13}},[x_2,x_3]=z_2^{t_{23}\text{ mod } t_{13}}\rangle.\]
	If $t_{23}\text{ mod } t_{13}=0$ then we are done. Otherwise, we take $l\in \Z$ such that $t_{13}+l(t_{23}\text{ mod } t_{13})=(t_{13}\text{ mod } (t_{23}\text{ mod } t_{13}))$ and define the new set of generators of $G$ by
	\[ x_1':=x_2^lx_1,\: x_2':=x_2,\: x_3':=x_3,\: z_1':=z_1,\: z_2':=z_2.\]
	It is easy to verify that now $G$ has the following presentation (where we omit the accents)
	\[ G=\langle x_1,x_2,x_3,z_1,z_2\:\vert\: [x_1,x_2]=z_1^{\alpha}z_2^{\beta},[x_1,x_3]=z_2^{t_{13}\text{ mod } (t_{23}\text{ mod } t_{13})},[x_2,x_3]=z_2^{t_{23}\text{ mod } t_{13}}\rangle.\]
	We can keep on reducing the powers of $z_2$ in the presentation using these two tricks. Eventually, this procedure will end (since it is nothing else than Euclid's algorithm on the two occurring powers of $z_2$). Hence, $G$ will have one of the following presentations
	\begin{align*}
		G&=\langle x_1,x_2,x_3,z_1,z_2\:\vert\: [x_1,x_2]=z_1^{\alpha}z_2^{\beta},[x_1,x_3]=z_2^{\gcd(t_{13},t_{23})}\rangle \text{ or }\\
		G&=\langle x_1,x_2,x_3,z_1,z_2\:\vert\: [x_1,x_2]=z_1^{\alpha}z_2^{\beta},[x_2,x_3]=z_2^{\gcd(t_{13},t_{23})}\rangle
	\end{align*}
	Define $\gamma:=\gcd(t_{13},t_{23})$ and remark that $\gamma\neq 0$ (since $t_{13}$ and $t_{23}$ are both non-zero). Thus, in the first case we are done. In the second case, by taking the following new generators for $G$
	\[ x_1':=x_2,\: x_2':=x_1^{-1},\: x_3':=x_3,\: z_1':=z_1,\: z_2':=z_2\]
	we obtain the desired presentation.
\end{proof}

The description in Lemma \ref{lem:presentation I(3,2)} allows us to describe for each group in $\I(3,2)$ an automorphism with finite Reidemeister number.

\begin{theorem}\label{thm:Spec I(3,2)}
	Let $G\in \I(3,2)$, then $G$ does not have the $R_{\infty}$--property.
\end{theorem}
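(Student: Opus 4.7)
The plan is to exhibit, for any $G \in \I(3,2)$ presented as in Lemma~\ref{lem:presentation I(3,2)}, an explicit automorphism $\varphi$ for which neither of the induced automorphisms $\varphi_1$ on $G/\sqrt{\gamma_2(G)} \cong \Z^3$ nor $\varphi_2$ on $\sqrt{\gamma_2(G)} \cong \Z^2$ has $1$ as an eigenvalue; by Lemma~\ref{lemma:RN product abelian} this will immediately give $R(\varphi) < \infty$.

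The first task is to constrain the possible shape of the matrix $A = (a_{ij})$ of $\varphi_1$ with respect to the basis $\overline{x_1}, \overline{x_2}, \overline{x_3}$. Expanding $[\varphi(x_2), \varphi(x_3)] = 1$ via Lemma~\ref{lemma:bilinearity [.,.] 2-nilpotent} and the relations $[x_1,x_2] = z_1^\alpha z_2^\beta$, $[x_1,x_3] = z_2^\gamma$ yields $a_{12}a_{23} = a_{22}a_{13}$ and $a_{12}a_{33} = a_{32}a_{13}$, which together with $\det A \neq 0$ force $a_{12} = a_{13} = 0$. So $A$ is lower block-triangular,
\[A = \begin{pmatrix} \epsilon & 0 & 0 \\ a_{21} & a_{22} & a_{23} \\ a_{31} & a_{32} & a_{33} \end{pmatrix}, \quad \epsilon = \pm 1,\quad M := \begin{pmatrix} a_{22} & a_{23} \\ a_{32} & a_{33} \end{pmatrix} \in \Gl(2,\Z),\]
and avoiding eigenvalue $1$ requires $\epsilon = -1$ together with $M$ having no eigenvalue $1$.

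Next I would expand $[\varphi(x_1), \varphi(x_2)] = \varphi([x_1, x_2])$ and $[\varphi(x_1), \varphi(x_3)] = \varphi([x_1, x_3])$. Setting $C := \begin{pmatrix} \alpha & 0 \\ \beta & \gamma \end{pmatrix}$, these compatibilities combine into the clean relation $BC = -CM$, where $B$ is the matrix of $\varphi_2$ in the basis $z_1, z_2$. Thus $B$ is forced to equal $-CMC^{-1}$, and $B \in M_2(\Z)$ iff the rational matrix $B' := CMC^{-1}$ is integer, equivalently iff $B' \in \Gl(2, \Z)$ preserves the sublattice $L := C(\Z^2) \subseteq \Z^2$ (which has index $m := |\alpha\gamma|$ and coincides with $\gamma_2(G)$). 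Since $M$ and $B'$ are conjugate they share eigenvalues, so the task reduces to producing $B' \in \Gl(2,\Z)$ preserving $L$ with no eigenvalue $\pm 1$ (the $+1$ eliminates eigenvalue $1$ from $A$, the $-1$ from $B = -B'$).

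For the final construction, the key observation is that $m\Z^2 \subseteq L$, so any $B'$ with $B' \equiv I \pmod m$ automatically preserves $L$. I would take
\[B' := \begin{pmatrix} 1 & m \\ m & 1+m^2 \end{pmatrix}.\]
Then $\det B' = 1$, $B' - I \in m \cdot M_2(\Z)$, and the characteristic polynomial $\lambda^2 - (2+m^2)\lambda + 1$ evaluates to $-m^2 \neq 0$ at $\lambda = 1$ and to $4 + m^2 \neq 0$ at $\lambda = -1$. Setting $M := C^{-1} B' C \in \Gl(2,\Z)$, $B := -B'$, and $A$ as above with $a_{21} = a_{31} = 0$, the data $(A, B)$ defines an endomorphism $\varphi$ of $G$ that respects the defining relations by the compatibility $BC = -CM$. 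Proposition~\ref{prop:phi1 aut ==> phi aut} then upgrades $\varphi$ to an automorphism, and Lemma~\ref{lemma:RN product abelian} gives $R(\varphi) = |\det(\mathds{1}_3 - A)|_\infty \cdot |\det(\mathds{1}_2 - B)|_\infty < \infty$, so $G$ does not have the $R_\infty$--property.

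The main conceptual obstacle is the second step: recognising that the commutator-compatibility between $\varphi_1$ and $\varphi_2$ reduces to the concrete lattice-preservation condition $B'(L) \subseteq L$ for $L = \gamma_2(G)$ inside $\sqrt{\gamma_2(G)} = \Z^2$. Once this reformulation is in hand, the ``$I + mX$'' trick produces a uniform family of automorphisms that works simultaneously for all admissible parameters $(\alpha, \beta, \gamma)$, bypassing any case analysis on the presentation.
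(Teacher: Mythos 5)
Your proof is correct, and at the top level it follows the same strategy as the paper: reduce to the presentation of Lemma~\ref{lem:presentation I(3,2)}, exhibit one explicit automorphism sending $x_1\mapsto x_1^{-1}$ and acting by a $2\times 2$ block on $\langle x_2,x_3\rangle$ modulo the centre, upgrade it via Proposition~\ref{prop:phi1 aut ==> phi aut}, and conclude with Lemma~\ref{lemma:RN product abelian}. Where you genuinely diverge is in \emph{how} the automorphism is produced. The paper simply writes down a two-parameter family of exponents in $\alpha,\beta,\gamma,k,l$ (with the side condition $4+\alpha\gamma^2kl\neq 0$) and verifies the relations by brute force; you instead first derive the structural constraints $a_{12}=a_{13}=0$ and $BC=\epsilon CM$ with $C=\left(\begin{smallmatrix}\alpha&0\\ \beta&\gamma\end{smallmatrix}\right)$, recognise that integrality of $M=C^{-1}B'C$ amounts to $B'$ preserving the lattice $L=C(\Z^2)=\gamma_2(G)$, and then use $m\Z^2\subseteq L$ (for $m=|\alpha\gamma|=[\sqrt{\gamma_2(G)}:\gamma_2(G)]$) to make any $B'\equiv \mathds{1}_2\pmod m$ work. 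This buys a uniform, verification-free construction with a clean Reidemeister number $2m^2(4+m^2)$ and no case distinctions or nonvanishing side conditions, at the cost of a short preliminary analysis of arbitrary automorphisms that the paper's direct computation does not need. All the individual steps check out: the two vanishing minors do force $a_{12}=a_{13}=0$ given $\det A\neq 0$; the compatibility relations for $[x_1,x_2]$ and $[x_1,x_3]$ do assemble into $BC=\epsilon CM$; and your $B'$ has determinant $1$ and characteristic polynomial avoiding $\pm1$ as roots precisely because $m\neq 0$. One sentence is phrased loosely (integrality of $B'=CMC^{-1}$ is not literally equivalent to $B'$ preserving $L$), but the direction you actually use --- start from $B'\in\Gl_2(\Z)$ with $B'(L)\subseteq L$ and define $M:=C^{-1}B'C$, which is then automatically in $\Gl_2(\Z)$ --- is fully justified, so this does not affect the argument.
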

\begin{proof}
	By Lemma \ref{lem:presentation I(3,2)} we know that
	\[ G=\langle x_1,x_2,x_3,z_1,z_2\:\vert\: [x_1,x_2]=z_1^{\alpha}z_2^{\beta},[x_1,x_3]=z_2^{\gamma}\rangle\]
	with $\alpha,\gamma\in \Z\setminus\{0\}$ and $\beta\in \Z$. Take $k,l\in \Z\setminus\{0\}$ such that $4+\alpha\gamma^2 kl\neq 0$ and define the map $\varphi$ on $G$ that is induced by
	\[ \varphi(x_1):=x_1^{-1},\quad \varphi(x_2):=x_2^{-\beta\gamma l-\alpha\gamma^2 kl-1}x_3^{\beta^2 l+\alpha\beta\gamma kl-\alpha k},\quad \varphi(x_3):=x_2^{-\gamma^2 l}x_3^{\beta\gamma l-1}, \]
	\[ \varphi(z_1):=z_1^{\alpha\gamma^2 kl+1}z_2^{\gamma k},\quad \varphi(z_2):=z_1^{\alpha\gamma l}z_2.\]
	Using the bilinearity of the commutator (see Lemma \ref{lemma:bilinearity [.,.] 2-nilpotent}) one can check that the relations are preserved by $\varphi$. Hence, $\varphi$ induces a morphism on $G$. Moreover, the matrix of the induced morphism $\varphi_1\in \End(G/\sqrt{\gamma_2(G)})$ with respect to the $\Z$-basis $\{x_1\sqrt{\gamma_2(G)},x_2\sqrt{\gamma_2(G)},x_3\sqrt{\gamma_2(G)}\}$ equals
	\[ \begin{pmatrix}
		-1 & 0 & 0 \\
		0 & -\beta\gamma l-\alpha\gamma^2 kl-1 & -\gamma^2 l \\
		0 & \beta^2 l+\alpha\beta\gamma kl-\alpha k & \beta\gamma l-1
	\end{pmatrix} \]
	and is an invertible matrix over $\Z$. Hence, Proposition \ref{prop:phi1 aut ==> phi aut} tells us that $\varphi$ is an automorphism of $G$. Using Lemma \ref{lemma:RN product abelian}, one can now calculate that
	\[ R(\varphi)=R(\varphi_1)R(\varphi_2)=(2|4+\alpha\gamma^2 kl|_{\infty})(|\alpha\gamma^2 kl|_{\infty})=2|\alpha\gamma^2 kl(4+\alpha\gamma^2 kl)|_{\infty}<\infty \]
	where we used the assumption that $4+\alpha\gamma^2 kl\neq 0$ with $k,l\in \Z\setminus\{0\}$.
\end{proof}

\section{Hirsch length 6}
By Theorem \ref{thm:Spec I(2r+1,1)} we know that all groups in $\I(5,1)$ have Reidemeister spectrum equal to $2\N_{>0}\cup \{\infty\}$ or $4\N_{>0}\cup\{\infty\}$. The only other classes to consider are $\I(3,3)$ and $\I(4,2)$. Malfait argued in \cite[Theorem 3.8]{malf00} that all groups in $\I(3,3)$ admit a \textit{hyperbolic} automorphism.

\begin{definition}
	Let $G$ be a finitely generated torsion-free c-step nilpotent group, then $\varphi\in \Aut(G)$ is called a hyperbolic automorphism if it has no eigenvalues of absolute value one.
\end{definition}

In particular, Theorem \ref{thm:equivalent statements R-inf} tells us that a hyperbolic automorphism has a finite Reidemeister number.

\begin{theorem}[{\cite[Theorem 3.8]{malf00}}]\label{thm:Spec I(3,3)}
	Any group $G\in \I(3,3)$ admits a hyperbolic automorphism. In particular, no group in $\I(3,3)$ has the $R_{\infty}$--property.
\end{theorem}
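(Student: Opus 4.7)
The plan is to exploit the defining feature of the class $\I(3,3)$: since $m = 3 = n(n-1)/2$, the alternating commutator map induces an isomorphism $\Lambda^2(G/\sqrt{\gamma_2(G)}) \cong \gamma_2(G)$. Indeed, for any $\Z$-basis $\{g_1,g_2,g_3\}$ of $G/\sqrt{\gamma_2(G)}$, Lemma~\ref{lemma:bilinearity [.,.] 2-nilpotent} shows that the three commutators $[g_i,g_j]$ ($i<j$) generate $\gamma_2(G)\cong\Z^3$, and any surjection $\Z^3\twoheadrightarrow\Z^3$ is an isomorphism, so the induced map $\Lambda^2(\Z^3)\to\gamma_2(G)$ is bijective. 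Rationally this identifies $\sqrt{\gamma_2(G)}\otimes\Q$ with $\Lambda^2(G/\sqrt{\gamma_2(G)})\otimes\Q$, which is the key tool for what follows.

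First I would pick a matrix $A\in{\rm SL}(3,\Z)$ whose characteristic polynomial is irreducible over $\Q$ with no root on the unit circle, for instance the companion matrix of $t^3-t-1$, whose roots are one real number of modulus $>1$ and two complex conjugates of modulus $<1$. Because $|\det A|=1$, the eigenvalues $\lambda_i\lambda_j$ of $\Lambda^2(A)$ equal $\pm 1/\lambda_k$, so all of them have modulus $\neq 1$. Consequently, if we can lift $A$ (or some power of it) to an automorphism of $G$, the lift will automatically be hyperbolic.

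The main obstacle is integrality of the lift on $\sqrt{\gamma_2(G)}$: $\Lambda^2(A)$ acts on $\gamma_2(G)\cong\Lambda^2(\Z^3)$, but need not preserve the possibly larger lattice $\sqrt{\gamma_2(G)}$. I would handle this by passing to a power. Set $d:=[\sqrt{\gamma_2(G)}:\gamma_2(G)]<\infty$ and choose $k\in\N_{>0}$ a multiple of $|\Gl(3,\Z/d\Z)|$, so that $A^k\equiv\mathds{1}_3\pmod d$ and hence $\Lambda^2(A^k)-\Id$ maps $\Lambda^2(\Z^3)$ into $d\cdot\Lambda^2(\Z^3)$. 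For any $w\in\sqrt{\gamma_2(G)}$, the element $d\cdot w$ lies in $\gamma_2(G)$, so $\Lambda^2(A^k)(d\cdot w)-d\cdot w\in d\cdot\gamma_2(G)$, from which $\Lambda^2(A^k)(w)\in w+\gamma_2(G)\subseteq\sqrt{\gamma_2(G)}$. Thus $\Lambda^2(A^k)$ extends to an automorphism $\psi$ of $\sqrt{\gamma_2(G)}$.

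Finally, pick preimages $g_i'\in G$ with $g_i'\sqrt{\gamma_2(G)}=A^k(g_i\sqrt{\gamma_2(G)})$ and define $\varphi$ on $G$ by $\varphi(g_i)=g_i'$ together with $\varphi|_{\sqrt{\gamma_2(G)}}=\psi$. To see that $\varphi$ extends to an endomorphism of $G$ one checks $[\varphi(g_i),\varphi(g_j)]=\varphi([g_i,g_j])$: under our identification both sides correspond to $\Lambda^2(A^k)(g_i\wedge g_j)$. Proposition~\ref{prop:phi1 aut ==> phi aut} then upgrades $\varphi$ to an automorphism of $G$, and its eigenvalues $\lambda_i^k$ on $G/\sqrt{\gamma_2(G)}$ and $\lambda_i^k\lambda_j^k$ on $\sqrt{\gamma_2(G)}$ all have modulus $\neq 1$. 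Hence $\varphi$ is hyperbolic, and by Theorem~\ref{thm:equivalent statements R-inf} one has $R(\varphi)<\infty$, so $G$ does not have the $R_\infty$--property.
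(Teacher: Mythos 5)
Your argument is correct, but it is worth noting that the paper does not actually prove this statement: it is quoted verbatim from Malfait \cite[Theorem 3.8]{malf00}, so you have supplied a self-contained proof where the paper only gives a citation. Your route is the classical Anosov-type construction: the numerical coincidence $m=3=\binom{3}{2}$ forces the commutator map to induce an isomorphism $\Lambda^2\bigl(G/\sqrt{\gamma_2(G)}\bigr)\cong\gamma_2(G)$, after which any $A\in{\rm SL}(3,\Z)$ with no eigenvalue on the unit circle (your $t^3-t-1$ works, since $\lambda_i\lambda_j=1/\lambda_k$ keeps the exterior square off the unit circle too) lifts, up to replacing $A$ by a power, to a hyperbolic automorphism of $G$. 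The two genuinely delicate points --- that $\Lambda^2(A^k)$ must preserve the possibly strictly larger lattice $\sqrt{\gamma_2(G)}\supseteq\gamma_2(G)$, which you handle by choosing $k$ with $A^k\equiv\mathds{1}\pmod{d}$ for $d=[\sqrt{\gamma_2(G)}:\gamma_2(G)]$, and that the prescription on generators really defines an endomorphism, which reduces to checking $[\varphi(g_i),\varphi(g_j)]=\varphi([g_i,g_j])$ and is then upgraded to an automorphism by Proposition \ref{prop:phi1 aut ==> phi aut} --- are both addressed correctly. This is essentially the same mechanism as in Malfait's paper (which is concerned with Anosov diffeomorphisms on six-dimensional nilmanifolds), but your version has the advantage of being uniform over all of $\I(3,3)$ at once, with no case analysis and no appeal to an external classification; the price is only that the resulting automorphism is a (possibly large) power of the ``natural'' one.
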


\begin{remark}
	In the article by Malfait, they also prove (see \cite[Proposition 3.1]{malf00}) that each automorphism of a group in $\I(3,2)$ has an eigenvalue of absolute value one (and thus the group does not admit a hyperbolic automorphism). Nevertheless, in Theorem \ref{thm:Spec I(3,2)} we constructed for each group in $\I(3,2)$ an automorphism that does not have one as an eigenvalue (or equivalently has finite Reidemeister number). 
\end{remark}

What rests is to consider the groups in $\I(4,2)$. We start by describing the classification of all the groups in $\I(4,2)$. More details about this classification can be found in \cite{gss82}. Let $G\in \I(4,2)$. Note that $\sqrt{\gamma_2(G)}/\gamma_2(G)$ is a finitely generated abelian torsion group. Hence, it follows that
\[ \frac{\sqrt{\gamma_2(G)}}{\gamma_2(G)}\cong \frac{\Z}{\delta\Z}\times \frac{\Z}{\lambda\delta\Z} \]
where $\delta,\lambda\in \N_{>0}$. We define $\delta(G):=\delta$ and $\lambda(G):=\lambda$ to be these unique integers. Besides the invariants $\delta(G)$ and $\lambda(G)$, we also need the notion of \textit{$\lambda$-equivalent} binary quadratic forms for the classification of the groups in $\I(4,2)$.  

\begin{definition}\label{def:equivalent binary quadratic forms}
	Let $\Phi$ and $\Psi$ be two binary quadratic forms over $\Z$ and $\lambda\in \Z\setminus\{0\}$ some non-zero integer. Then $\Phi$ and $\Psi$ are called $\lambda$-equivalent, denoted by $\Phi\overset{\lambda}{\sim}\Psi$, if there exists an invertible integer matrix
	\[ \begin{pmatrix}
		a & b \\
		\lambda c & d
	\end{pmatrix}\in \Gl_2(\Z), \text{ with } c\in \Z\]
such that
\[ \Psi(X,Y)=\pm \Phi(aX+bY,\lambda cX+dY). \]
\end{definition}

One can easily verify that the relation ``$\lambda$-equivalence'' is indeed an equivalence relation on the set of binary quadratic forms over $\Z$.

Let $\Phi(X,Y)=aX^2+bXY+cY^2$ be a binary quadratic form over $\Z$ and $\delta,\lambda\in \N_{>0}$. We define the group $G(\delta,\lambda,\Phi)$ by
\[ G(\delta,\lambda,\Phi):=\Bigg\langle x_1,x_2,x_3,x_4,z_1,z_2\: \Biggg\vert \: \begin{array}{l}
	[x_1,x_3]=z_2^{\delta\lambda},\: [x_1,x_4]=z_1^{\delta}\\
	\relax [x_2,x_3]=z_1^{a\delta}z_2^{b\delta\lambda},\: [x_2,x_4]=z_2^{-c\delta\lambda}\\
	\relax [x_1,x_2]=[x_3,x_4]=1 \text{ and } z_1,\: z_2 \text{ central}
\end{array} \Bigg\rangle.\]
Note that $G(\delta,\lambda,\Phi)\in \I(4,2)$, $\lambda(G(\delta,\lambda,\Phi))=\lambda$ and $\delta(G(\delta,\lambda,\Phi))=\delta$.
Grunewald, Segal and Sterling gave a classification of the groups in $\I(4,2)$ using $\lambda$-equivalent binary quadratic forms and the groups $G(\delta,\lambda,\Phi)$ defined above.

\begin{theorem}[{\cite[Theorem 1]{gss82}}]\label{thm:presentation I(4,2)}
	Let $\delta,\lambda\in \N_{>0}$. The assignment
	\[ \Phi\mapsto G(\delta,\lambda,\Phi) \]
	induces a bijective correspondence between the set of $\lambda$-equivalence classes of binary quadratic forms over $\Z$ and the set of isomorphism classes of groups $G\in \I(4,2)$ with $\lambda(G)=\lambda$ and $\delta(G)=\delta$.\\
	In other words, any group $G\in\I(4,2)$ is isomorphic to some $G(\delta(G),\lambda(G),\Phi)$ and two such groups $G(\delta,\lambda,\Phi)$ and $G(\delta',\lambda',\Psi)$ are isomorphic if and only if $\delta=\delta'$, $\lambda=\lambda'$ and $\Phi\overset{\lambda}{\sim}\Psi$.
\end{theorem}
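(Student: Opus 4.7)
The plan is to classify $\I(4,2)$-groups by encoding each such $G$ by the abelian groups $V := G/\sqrt{\gamma_2(G)} \cong \Z^4$ and $W := \sqrt{\gamma_2(G)} \cong \Z^2$ together with the commutator pairing $c : \Lambda^2 V \to W$, which by Lemma \ref{lemma:bilinearity [.,.] 2-nilpotent} is well-defined and has image $\gamma_2(G) \subseteq W$. A standard argument using Proposition \ref{prop:phi1 aut ==> phi aut} shows that two such triples give isomorphic $2$-step nilpotent groups if and only if they are related by a pair $(A,B) \in \Gl_4(\Z) \times \Gl_2(\Z)$ intertwining the pairings. Hence Theorem \ref{thm:presentation I(4,2)} is equivalent to an orbit classification of pairs of alternating forms on $\Z^4$ valued in $\Z^2$ under this group action, refined by the invariants $\delta$ and $\lambda$.

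First I would normalize the target: choose a basis of $W$ bringing $\gamma_2(G)$ into Smith normal form, so that $\gamma_2(G) = \delta\Z \oplus \delta\lambda\Z$ inside $W = \Z \oplus \Z$. Writing $c = (\delta c_1, \delta\lambda c_2)$ then yields two alternating forms $c_1, c_2 : \Lambda^2 \Z^4 \to \Z$, with $c_1$ primitive and $c_2$ primitive modulo $\lambda$. The invariants $\delta$ and $\lambda$ are recovered intrinsically from $\sqrt{\gamma_2(G)}/\gamma_2(G)$, so the remaining $\Gl_2(\Z)$-freedom is the subgroup preserving the flag $\delta\Z \oplus \delta\lambda\Z$, a small group of $\pm 1$-triangular shape.

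Next I would normalize $c_1$ via $\Gl_4(\Z)$-congruence. A primitive alternating form on $\Z^4$ admits a symplectic basis, so one may arrange $c_1 = x_1^* \wedge x_4^*$, which corresponds to the relations $[x_1, x_4] = z_1^{\delta}$, $[x_1, x_2] = [x_3, x_4] = 1$, and $[x_i, x_j] \in \langle z_2^{\delta\lambda}\rangle$ for the remaining three pairs $\{i,j\}$. The stabilizer of $c_1$ in $\Gl_4(\Z)$ is a parabolic-type subgroup; using it to further simplify $c_2$, one may force $[x_1, x_3] = z_2^{\delta\lambda}$ and read off the remaining commutators as $[x_2, x_3] = z_1^{a\delta}z_2^{b\delta\lambda}$ and $[x_2, x_4] = z_2^{-c\delta\lambda}$. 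The triple $(a,b,c)$ then assembles into the binary quadratic form $\Phi(X,Y) = aX^2 + bXY + cY^2$, showing that every $G \in \I(4,2)$ is isomorphic to some $G(\delta, \lambda, \Phi)$.

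For the equivalence statement I would compute the residual stabilizer of the normalized data $(c_1;\, \delta\Z \oplus \delta\lambda\Z)$. This stabilizer fixes $x_1$ and $x_4$ (up to sign forced by $c_1$) and acts on the complementary $(x_2,x_3)$-plane by a matrix $\left(\begin{smallmatrix} a & b \\ \lambda c & d \end{smallmatrix}\right) \in \Gl_2(\Z)$; the factor $\lambda$ appears because any admissible mixing of $z_1$ and $z_2$ compatible with the Smith normal form of $\gamma_2(G)$ restricts the off-diagonal entry to a multiple of $\lambda$. A direct computation of how such a transformation acts on $(a,b,c)$ should then yield precisely the rule $\Psi(X,Y) = \pm \Phi(aX+bY,\, \lambda cX + dY)$ of Definition \ref{def:equivalent binary quadratic forms}, with the sign $\pm$ arising from the involution $z_1 \mapsto -z_1$ (or the analogous symmetry on $x_i$). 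The main obstacle is precisely this final calculation: correctly identifying the residual stabilizer, which is neither all of $\Gl_4(\Z)$ nor a pure symplectic group but an arithmetic subgroup reflecting the chain $\gamma_2(G) \subset \sqrt{\gamma_2(G)}$, and then verifying that its induced action on $(a,b,c)$ matches $\lambda$-equivalence on the nose.
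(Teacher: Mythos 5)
The paper does not prove this statement at all: it is quoted directly from Grunewald--Segal--Sterling \cite[Theorem 1]{gss82}, so there is no internal proof to compare against. Measured against the actual classification argument, your setup is the right one but a central step fails. The reduction to classifying the commutator pairing $c:\Lambda^2 V\to W$ up to the action of $\Gl_4(\Z)$ on $V$ and of the flag-preserving subgroup of $\Gl_2(\Z)$ on $W$ is sound (for $V,W$ free abelian the central extension is indeed determined by $c$, since $H^2(\Z^4;\Z^2)\cong \Hom(\Lambda^2\Z^4,\Z^2)$ has no Ext contribution), and the Smith normal form of $\gamma_2(G)\subset\sqrt{\gamma_2(G)}$ correctly isolates $\delta$ and $\lambda$. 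But the normalization ``$c_1=x_1^*\wedge x_4^*$'' is false in general: a primitive alternating form on $\Z^4$ need not be decomposable; its normal form is $e_1^*\wedge e_2^*+d\,e_3^*\wedge e_4^*$ with $d$ its Pfaffian. Indeed, in the target presentation $G(\delta,\lambda,\Phi)$ the $z_1$-component of the pairing is $\delta(x_1^*\wedge x_4^*+a\,x_2^*\wedge x_3^*)$, of Pfaffian $a$, so your intermediate normal form contradicts the statement you are trying to prove whenever $a\neq 0$; nor can you repair this by re-choosing $z_1$ within the flag, since that would require $\Phi$ to represent $0$, which a form such as $X^2+Y^2$ does not.

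The missing organizing idea is that $\Phi$ is the Pfaffian of the pencil: writing $c=z_1\otimes\delta c_1+z_2\otimes\delta\lambda c_2$, a direct computation with the presentation of $G(\delta,\lambda,\Phi)$ gives $\mathrm{Pf}(Xc_1+Yc_2)=aX^2+bXY+cY^2=\Phi(X,Y)$. This is what makes the equivalence direction transparent: a change of basis $A\in\Gl_4(\Z)$ of $V$ multiplies the Pfaffian by $\det A=\pm 1$ (the sign in Definition~\ref{def:equivalent binary quadratic forms}), while the admissible changes of basis of $W$ preserving $\delta\Z\oplus\delta\lambda\Z$ act on the pencil parameters $(X,Y)$ exactly by matrices $\left(\begin{smallmatrix} a & b\\ \lambda c & d\end{smallmatrix}\right)$. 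Your sketch instead attaches this matrix to the action on the $(x_2,x_3)$-plane inside $V$, which is not where the quadratic form lives, and it also understates the residual center-side freedom (it is a $\Gamma_0(\lambda)$-type congruence subgroup, not a finite triangular group). Finally, the surjectivity direction --- that \emph{every} pencil of alternating forms with the prescribed invariants can be brought to the $G(\delta,\lambda,\Phi)$ shape --- is the bulk of the work in \cite{gss82} and is not supplied by ``using the stabilizer to further simplify $c_2$''.
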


Using the classification in Theorem \ref{thm:presentation I(4,2)}, we are now able to describe for each group in $\I(4,2)$ an automorphism with a finite Reidemeister number.

\begin{theorem}\label{thm:Spec I(4,2)}
	Let $G\in \I(4,2)$, then $G$ does not have the $R_{\infty}$--property.
\end{theorem}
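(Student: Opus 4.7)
The plan is, in the spirit of the proof of Theorem \ref{thm:Spec I(3,2)}, to construct for each $G\in\I(4,2)$ an explicit automorphism $\varphi\in\Aut(G)$ whose induced maps $\varphi_1$ on $G/\sqrt{\gamma_2(G)}\cong\Z^4$ and $\varphi_2$ on $\sqrt{\gamma_2(G)}\cong\Z^2$ both avoid $1$ as an eigenvalue; by Lemma \ref{lemma:RN product abelian} this will yield $R(\varphi)<\infty$. Using the classification in Theorem \ref{thm:presentation I(4,2)} we may assume $G=G(\delta,\lambda,\Phi)$ with $\Phi(X,Y)=aX^2+bXY+cY^2$, and then construct $\varphi$ on the generators $x_1,\dots,x_4,z_1,z_2$ in terms of the parameters $\delta,\lambda,a,b,c$.

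I would specify $\varphi$ by integer matrices: $\varphi_1$ encoded by $M\in\Gl_4(\Z)$ acting on the abelianization basis $\{x_1,x_2,x_3,x_4\}$, and $\varphi_2$ by $N\in\Gl_2(\Z)$ acting on $\{z_1,z_2\}$. Using the bilinearity of the commutator (Lemma \ref{lemma:bilinearity [.,.] 2-nilpotent}), the requirement that $\varphi$ preserves the defining commutator relations of $G(\delta,\lambda,\Phi)$ translates into the equivariance $M^T A_i M=N_{i1}A_1+N_{i2}A_2$ for $i=1,2$, where $A_1,A_2$ are the skew $4\times 4$ integer matrices encoding the $z_1$- and $z_2$-components of the commutator pairing read off from the presentation. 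Proposition \ref{prop:phi1 aut ==> phi aut} then upgrades any endomorphism with $\varphi_1\in\Gl_4(\Z)$ to an automorphism, so the task reduces to finding such integer $M,N$ with $\det M=\pm 1$ and with neither matrix having $1$ as an eigenvalue.

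A natural first ansatz is $M=\diag(-I_2,D)$ with $D\in\Gl_2(\Z)$ to be chosen; then $M$ has eigenvalues $-1,-1$ from the top block together with those of $D$. Solving the equivariance parametrizes $D$ by integers $r,k$ (subject to the divisibility condition $\lambda\mid ak$) as
\[D=\begin{pmatrix} r & -ck\\ ak & r-bk\end{pmatrix},\qquad N=\begin{pmatrix} bk-r & -ak/\lambda\\ c\lambda k & -r\end{pmatrix},\]
and a short computation yields $\det N=\det D=r^2-rbk+ack^2$ while the eigenvalues of $N$ turn out to be the negatives of those of $D$. The task thus reduces to producing integers $(r,k)$ with $\lambda\mid ak$, $r^2-rbk+ack^2=\pm 1$, and such that neither $+1$ nor $-1$ is an eigenvalue of $D$.

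The main obstacle is ensuring the existence of such $(r,k)$ for every $\Phi$. When the discriminant $b^2-4ac>0$ the Pell-type equation admits infinitely many integer solutions and avoiding the eigenvalues $\pm 1$ is straightforward. The delicate case is $b^2-4ac\le 0$: the form $r^2-rbk+ack^2$ is then definite or degenerate and may fail to represent $\pm 1$ (for instance for non-principal $\Phi$ of a given negative discriminant). To cover these cases I would either (a) replace the top-left block $-I_2$ of $M$ by another finite-order element of $\Gl_2(\Z)$ with no eigenvalues $\pm 1$, such as $\begin{pmatrix} 0 & -1 \\ 1 & 0\end{pmatrix}$ or $\begin{pmatrix} 0 & -1 \\ 1 & -1\end{pmatrix}$, each yielding a variant Pell-type constraint that together exhaust the remaining forms, or (b) exploit the $\lambda$-equivalence freedom of Theorem \ref{thm:presentation I(4,2)} to replace $\Phi$ by a reduced representative in its class and complete the argument by a finite case analysis on the reduced forms.
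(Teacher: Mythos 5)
Your high-level strategy (use the classification of Theorem \ref{thm:presentation I(4,2)}, write down an explicit endomorphism, upgrade it via Proposition \ref{prop:phi1 aut ==> phi aut}, and compute $R(\varphi)$ by Lemma \ref{lemma:RN product abelian}) is exactly the paper's, and your translation of relation-preservation into the equivariance condition on $(M,N)$ is correct. But there is a genuine gap: your concrete ansatz $M=\diag(-I_2,D)$, which preserves the splitting $\langle x_1,x_2\rangle\oplus\langle x_3,x_4\rangle$, provably fails for some groups, and the fallback options you sketch do not close the hole. Concretely, take $\Phi(X,Y)=X^2+Y^2$ and $\lambda=2$: your constraints $\lambda\mid ak$ and $r^2-rbk+ack^2=\pm1$ become $2\mid k$ and $r^2+k^2=\pm1$, forcing $k=0$ and $D=\pm I_2$, so one of $D,N$ has eigenvalue $1$ and $R(\varphi)=\infty$. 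You correctly flag the definite case as delicate, but option (a) is not carried out (there is no argument that the handful of other finite-order top blocks ``exhaust the remaining forms'', and each produces its own Pell-type equation with its own divisibility constraint in $\lambda$), and option (b) cannot be ``a finite case analysis'': reduction theory gives finitely many reduced forms \emph{per discriminant}, but all discriminants occur, so infinitely many inequivalent definite forms remain.

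The paper avoids the Pell-type obstruction entirely by \emph{not} preserving your splitting: for $c=\Phi(0,1)\neq 0$ it uses the matrix pairing $x_1$ with $x_4$ and $x_2$ with $x_3$, namely $\varphi(x_1)=x_1^{-1-4c}x_4^{2}$, $\varphi(x_4)=x_1^{-2c}x_4$, $\varphi(x_2)=x_2^{-1-4c}x_3^{-2c}$, $\varphi(x_3)=x_2^{2}x_3$, together with $\varphi_2=-\Id$ on $\langle z_1,z_2\rangle$; this preserves the relations for \emph{every} $\delta,\lambda,a,b$, has determinant $1$, and gives $R(\varphi)=64c^2$. The remaining cases are handled by the $\lambda$-equivalence $\Psi_k(X,Y):=\Phi((k\lambda+1)X+kY,\lambda X+Y)$, whose $Y^2$-coefficient $k(ak+b)$ can be made nonzero whenever $(a,b)\neq(0,0)$, and by splitting off a direct $\Z$-factor and invoking Theorem \ref{thm:Spec I(3,2)} when $\Phi=0$. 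If you want to salvage your approach, the lesson is to allow $M$ to mix the two isotropic blocks of the commutator pairing; as it stands, your proof is incomplete precisely on the definite (and degenerate) forms.
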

\begin{proof}
	By Theorem \ref{thm:presentation I(4,2)} we can assume without loss of generality that $G=G(\delta,\lambda,\Phi)$ with $\delta,\lambda\in \N_{>0}$ and $\Phi(X,Y)=aX^2+bXY+cY^2$ a binary quadratic form over $\Z$.\\
    Let us first assume that $\Phi(0,1)=c\neq 0$. Consider the map $\varphi$ on $G$ that is induced by
	\[ \varphi(x_1):=x_1^{-1-4c}x_4^2,\quad \varphi(x_2):=x_2^{-1-4c}x_3^{-2c},\quad \varphi(x_3):=x_2^2x_3,\quad \varphi(x_4):=x_1^{-2c}x_4, \]
	\[ \varphi(z_1):=z_1^{-1},\quad \varphi(z_2):=z_2^{-1}.\]
	One can check using the bilinearity of the commutator (see Lemma \ref{lemma:bilinearity [.,.] 2-nilpotent}) that all relations are preserved by $\varphi$ and thus it induces a morphism on $G$. Moreover, the matrix of $\varphi_1\in \End(G/\sqrt{\gamma_2(G)})$ with respect to the $\Z$-basis $\{x_1\sqrt{\gamma_2(G)},x_2\sqrt{\gamma_2(G)},x_3\sqrt{\gamma_2(G)},x_4\sqrt{\gamma_2(G)}\}$ equals
	\[ \begin{pmatrix}
		-1-4c & 0 & 0 & -2c \\
		0 & -1-4c & 2 & 0 \\
		0 & -2c & 1 & 0 \\
		2 & 0 & 0 & 1
	\end{pmatrix} \]
	and is an invertible matrix over $\Z$. Proposition \ref{prop:phi1 aut ==> phi aut} now tells us that $\varphi$ is an automorphism of $G$. Using Lemma \ref{lemma:RN product abelian} we obtain that
	\[ R(\varphi)=R(\varphi_1)R(\varphi_2)=(16|c^2|_{\infty})4=64|c^2|_{\infty}<\infty \]
	where we used that $c\neq 0$.

	Assume that $c=0$. Note that if also $a=b=0$, then
	\begin{align*}
		G(\delta,\lambda,0)&=\Bigg\langle x_1,x_2,x_3,x_4,z_1,z_2\: \Biggg\vert \: \begin{array}{l}
			[x_1,x_3]=z_2^{\delta\lambda},\: [x_1,x_4]=z_1^{\delta}\\
			\relax [x_3,x_4]=1 \text{ and } x_2,\: z_1,\: z_2 \text{ central}
		\end{array} \Bigg\rangle\\
		&=\Bigg\langle x_1,x_3,x_4,z_1,z_2\: \Biggg\vert \: \begin{array}{l}
			[x_1,x_3]=z_2^{\delta\lambda},\: [x_1,x_4]=z_1^{\delta}\\
			\relax [x_3,x_4]=1 \text{ and } z_1,\: z_2 \text{ central}
		\end{array} \Bigg\rangle\times \Z\\
		&=:G'\times \Z
	\end{align*}
	where $G'\in \I(3,2)$. By Theorem \ref{thm:Spec I(3,2)} we can take some $\psi'\in \Aut(G')$ with $R(\psi')<\infty$. Consider now the automorphism $\varphi:=\psi'\times -\Id_{\Z}\in \Aut(G)$. Using Proposition 2.4 in \cite{send21}, it it follows that $R(\varphi)=R(\psi')R(-\Id_{\Z})=2R(\psi')<\infty$.\\
    So we can assume without loss of generality that $a$ and $b$ are not both zero (but $c=0$). We now argue that in this case, $G$ is isomorphic to some $G(\delta,\lambda,\Psi)$ with $\Psi(0,1)\neq 0$. For this, define $\Psi_k(X,Y):=\Phi((k\lambda +1)X+kY,\lambda X+Y)$ for any $k\in \Z$. Note that the matrix 
	\[\begin{pmatrix}
		k\lambda +1 & k \\
		\lambda & 1
	\end{pmatrix}\in \Gl_2(\Z)\]
	and thus $\Phi\overset{\lambda}{\sim}\Psi_k$ for all $k\in \Z$. Since we know that $\Phi(X,Y)=aX^2+bXY$, some calculation yields that
	\[ \Psi_k(X,Y)=\left(a(k\lambda+1)^2+b\lambda(k\lambda+1)\right)X^2+\left(2ak(k\lambda+1)+b(2k\lambda+1)\right)XY+k\left(ak+b\right)Y^2. \]
	Since not both $a$ and $b$ are zero, we can take some $k_0\in \Z$ such that $k_0(ak_0+b)\neq 0$. Hence, $G\cong G(\delta,\lambda,\Psi_{k_0})$ and $\Psi_{k_0}(0,1)=k_0(ak_0+b)\neq 0$. Thus we reduced this case to the first one and we get an automorphism with a finite Reidemeister number.
\end{proof}

\section{Conclusion and minimal example}
In the previous sections, we argued that all finitely generated torsion-free 2-step nilpotent groups of Hirsch length at most 6 do not have the $R_{\infty}$--property. In Remark 7.8 in \cite{dl23} we gave an example of a finitely generated torsion-free 2-step nilpotent group of Hirsch length 7 that has the $R_{\infty}$--property.

\begin{theorem}
	All finitely generated torsion-free 2-step nilpotent groups of Hirsch length at most 6 do not have the $R_{\infty}$--property. This upper bound is sharp, i.e. there exists a finitely generated torsion-free 2-step nilpotent group of Hirsch length 7 with the $R_{\infty}$--property.
\end{theorem}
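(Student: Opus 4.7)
The proof is essentially a bookkeeping argument that combines all earlier results, so the plan is to enumerate the possible classes $\I(n,m)$ that arise for each Hirsch length up to 6 and invoke the corresponding theorem in each case. Recall from Section 1.1 that any finitely generated torsion-free 2-step nilpotent group $G$ lies in some class $\I(n,m)$ with $n,m\in\N_{>0}$, $h(G)=n+m$, and $m\leq n(n-1)/2$. The constraint $m\leq n(n-1)/2$ immediately rules out $h(G)\in\{1,2\}$, so the statement is vacuous there.

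For Hirsch length $3,4,5$, I would note that the constraint forces the class to be one of $\I(2,1)$, $\I(3,1)$, $\I(4,1)$, or $\I(3,2)$. The classes $\I(n,1)$ are handled by Theorem \ref{thm:Spec I(2r,1)} and Theorem \ref{thm:Spec I(2r+1,1)}, which in fact compute the full Reidemeister spectrum and in particular exhibit automorphisms with finite Reidemeister number. The class $\I(3,2)$ is handled by Theorem \ref{thm:Spec I(3,2)}. For Hirsch length $6$, the possibilities are $\I(5,1)$, $\I(4,2)$, and $\I(3,3)$; these are handled respectively by Theorem \ref{thm:Spec I(2r+1,1)}, Theorem \ref{thm:Spec I(4,2)}, and Theorem \ref{thm:Spec I(3,3)} (the last via Malfait's hyperbolic automorphism and Theorem \ref{thm:equivalent statements R-inf}). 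In each case the conclusion is that $G$ admits an automorphism with finite Reidemeister number, so $\Spec(G)\neq\{\infty\}$.

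For the sharpness statement, the plan is simply to point to the explicit construction already published by the authors: Remark 7.8 in \cite{dl23} exhibits a finitely generated torsion-free 2-step nilpotent group of Hirsch length $7$ which enjoys the $R_{\infty}$-property, so no analogous result can hold for Hirsch length $7$.

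There is no genuine obstacle here: the work has been done in the preceding sections, and the only risk is an incomplete enumeration of the pairs $(n,m)$ with $n+m\leq 6$ and $m\leq n(n-1)/2$. I would therefore be careful to list these pairs explicitly in the proof — namely $(2,1)$, $(3,1)$, $(4,1)$, $(3,2)$, $(5,1)$, $(4,2)$, $(3,3)$ — to make it manifest that every possibility has been addressed.
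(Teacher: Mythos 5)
Your proposal is correct and matches the paper's proof: both combine Theorems \ref{thm:Spec I(2r,1)}, \ref{thm:Spec I(2r+1,1)}, \ref{thm:Spec I(3,2)}, \ref{thm:Spec I(3,3)} and \ref{thm:Spec I(4,2)} to cover every class $\I(n,m)$ with $n+m\leq 6$ and $m\leq n(n-1)/2$, and then cite the Hirsch length $7$ example from Remark 7.8 of \cite{dl23} (a group in $\I(4,3)$) for sharpness. Your explicit enumeration of the seven admissible pairs $(n,m)$ is complete and is a reasonable extra precaution.
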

\begin{proof}
	Combining Theorems \ref{thm:Spec I(2r,1)}, \ref{thm:Spec I(2r+1,1)}, \ref{thm:Spec I(3,2)}, \ref{thm:Spec I(3,3)} and \ref{thm:Spec I(4,2)} yields that there do not exist finitely generated torsion-free 2-step nilpotent groups of Hirsch length at most 6 with the $R_{\infty}$--property.\\
    We now give an example of such a group of Hirsch length 7 with the $R_{\infty}$--property. Consider the group $G$ defined by
	\[ G:=\Bigg\langle x_1,x_2,x_3,x_4,z_1,z_2,z_3\: \Biggg\vert \: \begin{array}{l}
		[x_1,x_2]=z_1,\: [x_2,x_3]=z_2,\: [x_3,x_4]=z_3,\\
		\relax [x_1,x_3]=[x_1,x_4]=[x_2,x_4]=1 \text{ and }  z_1,\: z_2,\: z_3 \text{ central}
	\end{array} \Bigg\rangle.\]
	Note that $G\in \I(4,3)$ is indeed a finitely generated torsion-free 2-step nilpotent group with $h(G)=7$. In \cite[Remark 7.8]{dl23} we argue that $G$ has the $R_{\infty}$--property by using that this group can be associated to the path graph on 4 vertices. In \cite[Example 4.1]{gw09}, it is proven directly that $G$ has the $R_{\infty}$--property.
\end{proof}

\medskip

\begin{remark}
For higher nilpotency classes the analogous problem is quite trivial. If $G$ is a finitely generated torsion-free nilpotent group of class $c$, then the Hirsch length of $G$ is at least $c+1$. It can be easily shown that for any $c\geq 3$ there exists a finitely generated torsion-free $c$-step nilpotent group of Hirsch length $c+1$ with the $R_{\infty}$--property.

In fact, the case $c=3$ can be proved in a similar way as Example 5.2 of \cite{gw09}, while the claim for such groups $G$ of higher nilpotency classes then reduces to the case $c=3$ by considering $G/\gamma_4(G)$.

\end{remark}

\bibliographystyle{alpha}

\end{document}